\DeclareMathOperator{\Char}{char}
\DeclareMathOperator{\lt}{lt}
\DeclareMathOperator{\lm}{lm}
\DeclareMathOperator{\HP}{HP}
\DeclareMathOperator{\HF}{HF}
\newcommand{\sm}{\setminus}
\newcommand{\C}{\mathbb{C}}
\newcommand{\N}{\mathbb{N}}
\newcommand{\Z}{\mathbb{Z}}
\newcommand{\Q}{\mathbb{Q}}
\newcommand{\R}{\mathbb{R}}
\newcommand{\F}{\mathbb{F}}
\newcommand{\E}{\mathbb{E}}
\newcommand{\A}{\mathbb{A}}
\newcommand{\HH}{\mathcal{H}}
\newcommand{\ve}{\varepsilon}
\newcommand{\1}{^{-1}}
\newcommand{\f}[2]{\frac{#1}{#2}}
\newtheorem{theorem}{Theorem}[section]
\newtheorem{definition}[theorem]{Definition}
\newtheorem{corollary}[theorem]{Corollary}
\newtheorem{proposition}[theorem]{Proposition}
\newtheorem{lemma}[theorem]{Lemma}
\numberwithin{theorem}{section}
\title{Three-term polynomial progressions in subsets of finite fields}
\author{Sarah Peluse}
\address{Department of Mathematics, Stanford University, Stanford, California 94305}
\email{speluse@stanford.edu}
\begin{document}

\begin{abstract}
Bourgain and Chang recently showed that any subset of $\F_p$ of density $\gg p^{-1/15}$ contains a nontrivial progression $x,x+y,x+y^2$. We answer a question of theirs by proving that if $P_1,P_2\in\Z[y]$ are linearly independent and satisfy $P_1(0)=P_2(0)=0$, then any subset of $\F_p$ of density $\gg_{P_1,P_2}p^{-1/24}$ contains a nontrivial polynomial progression $x,x+P_1(y),x+P_2(y)$.
\end{abstract}

\maketitle

\section{Introduction}
Let $P_1,\dots,P_m\in\Z[y]$ be polynomials satisfying $P_1(0)=\dots=P_m(0)=0$, and for each $N\in\N$, let $[N]$ denote the set $\{1,\dots,N\}$. Bergelson and Leibman's polynomial generalization of Szemer\'edi's Theorem~\cite{BL} states that if $A\subset[N]$ contains no progression
\begin{equation}\label{polyprog}
x,x+P_1(y),\dots,x+P_m(y)
\end{equation}
with $y\neq 0$, then $|A|=o_{P_1,\dots,P_m}(N)$.

When each $P_i$ is linear, Gowers's proof of Szemer\'edi's Theorem~\cite{Go} gives the explicit bound
\[
|A|\ll_{P_1,\dots,P_m}\f{N}{(\log\log{N})^{c_m}}.
\]
Quantitative bounds are known for the size of subsets of $[N]$ lacking nontrivial polynomial progressions in only two other special cases. The case when $m=1$ is covered by S\'ark\"ozy's Theorem~\cite{S}, which dealt with $P_1=y^2$, and later generalizations to other polynomials, such as work by S\'ark\"ozy~\cite{S2}, Balog, Pelik\'an, Pintz, and Szemer\'edi~\cite{BPPS}, Slijep\v{c}evi\'c~\cite{Sl}, and Lucier~\cite{L}. When $m\geq 2$, the only quantitative result for progressions involving nonlinear polynomials is due to Prendiville~\cite{P}, who dealt with the special case when $P_i=a_iy^d$ for a fixed $d\in\N$.

In this paper, we consider the related problem of bounding the size of $A\subset\F_q$ lacking nontrivial polynomial progressions. Of course, any bounds in the integer setting automatically hold in the prime field setting. However, one should expect that superior bounds hold in finite fields as the degrees of the $P_i$'s increase. Indeed, if, for example, $\deg P_m=d$ and $P_m(y)\neq 0$ for any $y\neq 0$, then one can greedily construct a subset of $[N]$ of density $\gg N^{-1/d}$ that lacks nontrivial progressions of the form~(\ref{polyprog}), since we must have $x\in[N]$ and $y\ll N^{1/d}$ for~(\ref{polyprog}) to lie in $[N]$. In the finite field setting, in contrast, both $x$ and $y$ can clearly run over all of $\F_q$ regardless of the degrees of the $P_i$, so such a construction does not work.

We will focus on the case when $m=2$ and $P_1$ and $P_2$ are linearly independent, so that the terms of the progression
\begin{equation}\label{polyprog2}
x,x+P_1(y),x+P_2(y)
\end{equation}
satisfy no linear relation. In this situation, we can prove a power-saving bound on the size of subsets of $\F_q$ lacking a nontrivial progression of the form~(\ref{polyprog2}), provided the characteristic of $\F_q$ is large enough:
\begin{theorem}\label{main2}
Let $P_1,P_2\in\Z[y]$ be two linearly independent polynomials with $P_1(0)=P_2(0)=0$. There exists a constant $c_{P_1,P_2}>0$ depending only on $P_1$ and $P_2$ such that if the characteristic of $\F_q$ is at least $c_{P_1,P_2}$, then any $A\subset\F_q$ containing no nontrivial progression
\[
x,x+P_1(y),x+P_2(y),\text{ }y\neq 0,
\]
satisfies
\begin{equation}\label{Abound}
|A|\ll_{P_1,P_2}q^{1-1/24}.
\end{equation}
\end{theorem}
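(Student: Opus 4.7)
My plan is to establish a generalized von Neumann inequality bounding the count of progressions~(\ref{polyprog2}) by a $U^2$-norm of one argument, and then close with a Fourier-based density increment. Let $\alpha := |A|/q$, $f := 1_A - \alpha$, and consider the trilinear counting operator
\[
\Lambda(g_0, g_1, g_2) := \E_{x, y \in \F_q} g_0(x) g_1(x + P_1(y)) g_2(x + P_2(y)).
\]
Since $A$ contains no nontrivial progression, $\Lambda(1_A, 1_A, 1_A) \leq \alpha/q$, whereas the ``random'' prediction is $\alpha^3$. Expanding $1_A = \alpha + f$, so long as $\alpha \gg q^{-1/2}$ there must exist an arrangement $\Lambda(g_0, g_1, g_2)$ with $g_i \in \{\alpha, f\}$ and at least one $g_i = f$ satisfying $|\Lambda(g_0, g_1, g_2)| \gg \alpha^3$. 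The entire task thus reduces to controlling $\Lambda$ whenever one slot is mean-zero.

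\textbf{PET, then degree lowering.} First I would run the polynomial exhaustion technique: iterated Cauchy--Schwarz in $y$ with successive shifts $y \mapsto y + h$ yields an estimate of the form
\[
|\Lambda(g_0, g_1, g_2)|^{C} \ll \|g_j\|_{U^s(\F_q)}^{2^s}
\]
for some $j \in \{0,1,2\}$ and some $s = s(P_1, P_2)$ depending only on the degree data. The large-characteristic hypothesis is invoked here to ensure that the polynomial denominators produced by each Cauchy--Schwarz step remain nonzero in $\F_q$. The main obstacle is then the degree-lowering step: I would argue inductively, in the spirit of Prendiville~\cite{P}, that if $|\Lambda(g_0, g_1, g_2)| \geq \eta$ with $g_j$ mean-zero, then $\|g_j\|_{U^{s-1}(\F_q)} \gg \eta^{O(1)}$. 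The idea is to apply the $U^s$-inverse theorem to produce correlation of $g_j$ with a polynomial phase of degree $\leq s-1$, then exploit the linear independence of $P_1$ and $P_2$ together with Weil-type bounds for exponential sums along the relevant affine curves to ``re-absorb'' that phase back into the bilinear $x$-structure of $\Lambda$. Iterating drops the controlling norm all the way down to $U^2$.

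\textbf{Closing via Fourier increment.} Once we know that $|\Lambda(g_0, g_1, g_2)| \geq \eta$ forces $\|g_j\|_{U^2(\F_q)} \gg \eta^{O(1)}$, Plancherel produces a nonzero frequency $\xi \in \F_q$ with $|\widehat{1_A}(\xi)| \gg \alpha^{O(1)}$, and hence a density increment of $1_A$ on an arithmetic progression (or, if $q$ is a prime power, a coset of the hyperplane $\{\xi \cdot x = c\}$). Since the density cannot exceed $1$, iterating the increment terminates in $O(\log 1/\alpha)$ steps, and carefully tracking the polynomial losses accumulated across PET, degree lowering, and the increment yields the claimed bound $|A| \ll_{P_1, P_2} q^{1-1/24}$. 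The principal difficulty throughout is the degree-lowering step: it is what converts a tower-type bound into a power-saving one, and its execution is delicate because each degree reduction calls upon equidistribution estimates that are specific to the algebraic shape of $(P_1, P_2)$ and require $\Char \F_q$ to be sufficiently large compared to their degrees.
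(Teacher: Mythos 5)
Your proposal takes a genuinely different route from the paper, and its central step is asserted rather than carried out.

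The paper explicitly disavows the PET/degree-reduction strategy you describe: the introduction states that Cauchy--Schwarz will \emph{never} be used to reduce the degrees of $P_1,P_2$. Instead, Proposition~\ref{2pt} applies Cauchy--Schwarz three times so as to collect $y$-variables into fibers of successive polynomial maps, bounding $\Lambda_{P_1,P_2}(f_0,f_1,f_2)$ by an average $\Lambda'_{P_1,P_2}(f_2,f_2)$ over a \emph{two-point} progression $x,\,x+Q_{P_1,P_2}(y)$ with $y$ ranging over the $\F_q$-points of an affine variety $V_{P_1,P_2}$. The only remaining input is a single Deligne-type bound (Proposition~\ref{charsum}, due to Kowalski) for $\sum_{y\in V_{P_1,P_2}(\F_q)}\psi(Q_{P_1,P_2}(y))$, whose hypotheses are verified by bounding $\dim V_{P_1,P_2}$ and $\dim W_{P_1,P_2}$ via leading-term-ideal and Hilbert-polynomial computations in Section~\ref{3}. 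Because this yields a direct counting theorem (Theorem~\ref{main1}) with a power-saving error, no density increment is needed: the number of progressions in $A^3$ is $\alpha^3 q^2 + O(q^{2-c})$, the trivial ones number at most $|A|$, and the bound $|A|\ll q^{1-1/24}$ drops out by comparison. Your density increment is therefore superfluous, and also problematic over non-prime $\F_q$: a coset of an $\F_p$-hyperplane is not closed under the $\F_q$-field operations that the polynomial progression uses, so the configuration does not restrict.

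The genuine gap is the degree-lowering step. You correctly flag it as the crux, but you supply no argument: ``apply the $U^s$-inverse theorem \ldots re-absorb that phase back into the bilinear $x$-structure'' is a program, not a proof. It is not explained how correlation of $g_j$ with a degree-$(s-1)$ phase can be converted into a power saving for $\Lambda$, nor which exponential-sum estimates the re-absorption would require, nor why the induction terminates at $U^2$ with only polynomial losses. This is the entire technical content of that route (indeed, making it work is the subject of the author's subsequent paper on degree lowering over finite fields), and nothing in your sketch carries it out. The paper sidesteps the obstacle entirely by never passing through $U^s$ for $s\geq 3$: its Cauchy--Schwarz applications are chosen so that $(x,x+Q_{P_1,P_2}(y))$ is close to equidistributed for $y\in V_{P_1,P_2}(\F_q)$, which reduces everything to a Weil-type estimate rather than an inverse theorem.
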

Note that the exponent of $q$ in~(\ref{Abound}) is independent of $P_1$ and $P_2$. Thus, when the degree of one of $P_1$ or $P_2$ is large enough, the conclusion of Theorem~\ref{main2} is stronger than what can possibly hold in the integer setting.

Since the number of trivial three-term polynomial progressions in $A$ is bounded above by $|A|$, Theorem~\ref{main2} is a consequence of the following result, which counts three-term polynomial progressions in subsets of finite fields:
\begin{theorem}\label{main1}
Let $P_1,P_2\in\Z[y]$ be two linearly independent polynomials satisfying $P_1(0)=P_2(0)=0$. There exists a $c_{P_1,P_2}>0$ depending only on $P_1$ and $P_2$ such that if the characteristic of $\F_q$ is at least $c_{P_1,P_2}$ and $A,B,C\subset\F_q$, then
\begin{align*}
\#\{(x,y)\in\F_q^2:(x,x+P_1(y)&,x+P_2(y))\in A\times B\times C\}= \\
&\f{|A||B||C|}{q}+O_{P_1,P_2}((|A||B||C|)^{1/2}q^{1/2-1/16}).
\end{align*}
\end{theorem}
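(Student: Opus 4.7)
The plan is to prove a generalised von Neumann inequality that controls the trilinear count in the statement by an appropriate norm of one of the three functions, and then exploit the polynomial structure together with the Weil bound for character sums to extract a $q^{1/16}$ saving.

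First, write $1_A = \alpha + f$, $1_B = \beta + g$, $1_C = \gamma + h$ with $\alpha = |A|/q$, $\beta = |B|/q$, $\gamma = |C|/q$ and $f,g,h$ of mean zero. Expanding, the constant-constant-constant term produces the main term $\alpha\beta\gamma\, q^2 = |A||B||C|/q$, while the other seven trilinear forms each contain at least one mean-zero factor. It therefore suffices to prove that whenever $F_1,F_2,F_3:\F_q\to\C$ are bounded pointwise by $1$ and at least one of them has mean zero,
\[
\Lambda(F_1,F_2,F_3) := \sum_{x,y\in\F_q} F_1(x)\, F_2(x+P_1(y))\, F_3(x+P_2(y))
\]
satisfies $|\Lambda(F_1,F_2,F_3)| \ll_{P_1,P_2} (|A||B||C|)^{1/2}\, q^{1/2 - 1/16}$, with the $L^2$-type constants on the right coming from the original indicators.

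The technical heart of the argument is a sequence of Cauchy--Schwarz inequalities in the spirit of the polynomial PET induction of Bergelson and Leibman. After the substitution $x\mapsto x-P_2(y)$, the form becomes $\sum_{x,y} F_1(x-P_2(y))\, F_2(x+(P_1-P_2)(y))\, F_3(x)$, which separates $F_3$ from the variable $y$. A Cauchy--Schwarz in $x$ eliminates $F_3$, and two further Cauchy--Schwarz applications in doubled $y$-variables successively eliminate $F_2$ and $F_1$. After these three doublings, $|\Lambda|^{8}$ is controlled by an exponential sum of the form
\[
\sum_{x,y_1,\ldots,y_k\in\F_q} e_q\!\left(\alpha\,\Phi(x,y_1,\ldots,y_k)\right),
\]
where $\Phi\in\Z[x,y_1,\ldots,y_k]$ is built from iterated differences of $P_1$ and $P_2$ and $e_q$ is a nontrivial additive character of $\F_q$. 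Applying the Weil bound to the inner $x$-sum (or Deligne, if a higher-dimensional variety intervenes) yields square-root cancellation, producing a saving of $q^{1/2}$ in the exponential sum; unwinding through the three Cauchy--Schwarz doublings converts this into the desired $q^{-1/16}$ saving in $|\Lambda|$.

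The principal obstacle is verifying that $\Phi$ remains non-degenerate in $x$ for most choices of $(y_1,\ldots,y_k)$ after the PET substitutions and reduction modulo $p$. Linear independence of $P_1$ and $P_2$ over $\Z$, together with $P_1(0) = P_2(0) = 0$, guarantees that $\Phi$ is nonzero in $\Z[x,y_1,\ldots,y_k]$, but ensuring this survives the Cauchy--Schwarz chain requires careful algebraic bookkeeping: PET-style substitutions can cause leading coefficients to collapse if the underlying polynomial relations align in the wrong way, and a collapse of this form would degrade Weil's bound to the trivial one. The large-characteristic hypothesis $\Char(\F_q) \geq c_{P_1,P_2}$ is precisely what guarantees that the relevant leading coefficients of $\Phi$ (polynomial expressions in the coefficients of $P_1$ and $P_2$) do not vanish modulo $p$, so that Weil applies with the advertised saving.
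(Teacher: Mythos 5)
Your outline identifies the right kinds of tools (Cauchy--Schwarz, Fourier inversion, algebraic bounds on character sums), but the technical route you describe is not the one the paper follows, and as stated it would not produce the degree-independent exponent $1/16$.

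The decisive issue is your invocation of ``iterated differences of $P_1$ and $P_2$'' and PET induction. The paper explicitly does \emph{not} use Cauchy--Schwarz to reduce the degrees of $P_1$ and $P_2$: a PET-style degree reduction costs a number of Cauchy--Schwarz applications that grows with $\deg P_1,\deg P_2$ (not just three), and the resulting power saving degrades as the degrees grow, which is incompatible with the uniform $q^{1/2-1/16}$ error term in the statement. Instead, Peluse's Cauchy--Schwarz steps \emph{preserve} $P_1$ and $P_2$ at full degree but produce new constraints on the doubled $y$-variables: after three applications one arrives at an average
\[
\Lambda'_{P_1,P_2}(f_2,f_2)=\E_{x\in\F_q,\;y\in V_{P_1,P_2}(\F_q)}f_2(x)f_2(x+Q_{P_1,P_2}(y)),
\]
where $V_{P_1,P_2}\subset\A^8$ is the affine variety cut out by the four relations $R^{(i)}_{P_1,P_2}=0$. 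Because the summation variable $y$ lives on a variety rather than on all of $\F_q^k$, the exponential sum $\sum_{y\in V_{P_1,P_2}(\F_q)}\psi(Q_{P_1,P_2}(y))$ cannot be handled by the one-variable Weil bound applied ``to the inner $x$-sum''; it requires the Deligne/Grothendieck--Lefschetz machinery (Kowalski's Proposition 9), whose hypotheses are (a) that $\dim V_{P_1,P_2}(\bar\F_q)\le 4$ and (b) that $Q_{P_1,P_2}$ is not degenerate on $V_{P_1,P_2}$, formalized as $\dim W_{P_1,P_2}(\bar\F_q)\le 7$.

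These dimension bounds are the real ``principal obstacle,'' and they are considerably subtler than the non-vanishing of leading coefficients of a polynomial $\Phi$ that you flag. Proving them occupies all of Section~3: one passes to $\bar\Q$, fixes a graded monomial order, and shows that (after intersecting $W_{P_1,P_2}$ with seven well-chosen hyperplanes) the ideal of defining equations contains elements whose leading monomials are powers of each of the sixteen variables, so the intersection is zero-dimensional. Verifying this involves a Gr\"obner-style bookkeeping with the explicit polynomials $R^{(i)}_{P_1,P_2}$ and $Q_{P_1,P_2}$, and in the crucial step one must show two auxiliary one-variable polynomials have no common root over $\bar\Q$ --- a genuinely different computation from checking that some leading coefficient of an iterated-difference polynomial is nonzero. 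The large-characteristic hypothesis enters to transfer these $\bar\Q$-dimension bounds to $\bar\F_q$ via Noether normalization, not merely to protect leading coefficients from vanishing.

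Two further gaps worth noting. First, if you Cauchy--Schwarz away all three of $F_1,F_2,F_3$ you are left with a bare point-count, with no remaining function whose mean-zero property can be exploited; the paper stops with two copies of $f_2$ in $\Lambda'_{P_1,P_2}$ and only then applies Fourier inversion and Parseval, using $\widehat{f_2}(1)=0$. Second, your decomposition $1_A=\alpha+f$, $1_B=\beta+g$, $1_C=\gamma+h$ produces terms where only $f$ (at position $x$) is mean-zero, and the Cauchy--Schwarz chain in the paper is arranged so that the mean-zero function always sits at position $x+P_2(y)$; you would need to argue separately (by a change of variables or by the much easier two-term estimate $\Lambda_{P_1}$) that these other terms are also small. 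The paper sidesteps this by using a telescoping decomposition that only ever needs to bound $\Lambda_{P_1,P_2}(1_A,1_B,f_C)$ and $\Lambda_{P_1}(1_A,f_B)$.
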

Theorem~\ref{main1} says that if the characteristic of $\F_q$ is large enough, then any subset of $\F_q$ of density at least $q^{-1/24+\ve}$ contains very close to the expected number of progressions~(\ref{polyprog2}) in a random set of the same density.

Bourgain and Chang~\cite{BC} were the first to consider the problem of finding quantitative bounds for the polynomial Szemer\'edi Theorem in finite fields. In~\cite{BC}, Bourgain and Chang prove that
\[
\#\{(x,y)\in\F_p^2:(x,x+y,x+y^2)\in A^3\}=\f{|A|^3}{p}+O(|A|^{3/2}p^{1/2-1/10}),
\]
when $A\subset\F_p$, and ask whether such a result with a power-saving error term holds when $y$ and $y^2$ are replaced by any pair of linearly independent polynomials with zero constant term. Thus, Theorem~\ref{main1} answers their question in the affirmative. Note that the error term in Theorem~\ref{main1} is larger than the error term in Bourgain and Chang's result, however, so we do not quantitatively recover their result when $P_1=y$ and $P_2=y^2$.

Though Bourgain and Chang were the first to consider polynomial progressions, there was work prior to theirs on other nonlinear configurations in finite fields. For example, Shkredov~\cite{Sh} showed that if $A,B,C\subset\F_q$ satisfy $|A||B||C|\gg p^{5/2}$, then there exist $x,y\in\F_p$ such that $(x,x+y,xy)\in A\times B\times C$.

Results such as Shkredov's, Bourgain and Chang's, and ours are connected to questions about expanding polynomials and sum-product phenomena for large subsets of finite fields. One corollary of Shkredov's result is that if $A,B\subset\F_p$, then
\[
\#\{a^2+ab:a\in A,b\in B\}\geq p-1-\f{40p^{5/2}}{|A||B|},
\]
so that $\#\{a^2+ab:a\in A,b\in B\}\gg p$ whenever $|A||B|\gg p^{3/2}$. Similarly, Theorem~\ref{main1} implies that if $P\in\Z[y]$ has degree at least two, then the polynomial $Q(x,y):=x+P(y-x)$ satisfies
\begin{equation}\label{expand}
\#\{Q(a,b):a\in A,b\in B\}\gg_{P_1,P_2}q
\end{equation}
whenever $A,B\subset\F_q$, $|A||B|\gg_{P_1,P_2}q^{2-1/8}$, and the characteristic of $\F_q$ is large enough.

Expanding polynomials and sum-product for large sets have been studied extensively, and related results can be found in papers of Hart, Iosevich, and Solymosi~\cite{HIS}, Vu~\cite{V}, Hart, Li, and Shen~\cite{HLS}, Bukh and Tsimerman~\cite{BT}, and Tao~\cite{T}. Indeed, the bound~(\ref{expand}) is not new--it follows immediately from Theorem 1 of~\cite{T}. Work on large sets has tended to combine some sort of algebraic input (such as the Weil bound) with extensive use of the Cauchy-Schwarz inequality and Fourier analysis on $\F_q$. Our proof will also be in this vein.

A common approach to counting configurations such as~(\ref{polyprog}) in subsets of abelian groups involves bounding averages of the type
\begin{equation}\label{polyavg}
\E_{x,y}f_0(x)f_1(x+P_1(y))\dots f_m(x+P_{m}(y)),
\end{equation}
for $f_i$ with $\|f_i\|_\infty\leq 1$ and $f_m$ having mean zero. Repeated applications of Cauchy-Schwarz are often used to bound~(\ref{polyavg}) in terms of an average over some other (often much longer) configuration that is easier to deal with. When starting with non-linear polynomial configurations, usually Cauchy-Schwarz is used to replace these non-linear polynomials with their discrete derivative. This eventually leads to a bound for~(\ref{polyavg}) in terms of an average of averages over linear configurations. For example, Prendiville~\cite{P} bounds~(\ref{polyavg}) by an average of local Gowers $U^s$-norms, where the degree $s$ grows extremely quickly as the degrees of the $P_i$ and the length of the progression grow.

The general strategy of the proof of Theorem~\ref{main1} is also to  use Cauchy-Schwarz to bound
\begin{equation}\label{3polyavg}
\E_{x,y\in\F_q}f_0(x)f_1(x+P_1(y))f_2(x+P_2(y))
\end{equation}
in terms of an average
\[
\E_{x,y}f_2(x)f_2(x+Q_1(y))\dots f_2(x+Q_m(y))
\]
over some other polynomial progression. We will never use Cauchy-Schwarz to reduce the degrees of $P_1$ and $P_2$, however. Instead, we will apply Cauchy-Schwarz so that, as we range over $x$ and $y$, the $(m+1)$-tuples $(x,x+Q_1(y),\dots,x+Q_m(y))$ are close to being equidistributed in $\F_q^{m+1}$. Thus, the average~(\ref{3polyavg}) is always small whenever $f_2$ has mean zero.

This paper is organized as follows. In Section~\ref{2}, we will bound~(\ref{3polyavg}) in terms of an average over a length $2$ polynomial progression $x,x+Q_{P_1,P_2}(y)$. Here $x\in\F_q$ and $y$ ranges over the $\F_q$-points $V_{P_1,P_2}(\F_q)$ of some algebraic variety. Showing that the map $(x,y)\mapsto (x,x+Q_{P_1,P_2}(y))$ is close to equidistributed boils down to checking that $Q_{P_1,P_2}$ is sufficiently non-degenerate on $V_{P_1,P_2}$. We verify this non-degeneracy in Section~\ref{3}, and then in Section~\ref{4} complete the proof of Theorem~\ref{main1}.

\section*{Acknowledgments}
The author thanks Brian Conrad for many helpful conversations and Will Sawin, Kannan Soundararajan, and the anonymous referee for helpful comments on earlier versions of this paper.

This material is based upon work supported by the National Science Foundation under Grant No. DMS-1440140 while the author was in residence at the Mathematical Sciences Research Institute during the Spring 2017 semester. The author is also supported by the National Science Foundation Graduate Research Fellowship Program under Grant No. DGE-114747 and by the Stanford University Mayfield Graduate Fellowship.

\section{The Cauchy-Schwarz argument}\label{2}

\subsection{Notation} We will first fix notation and normalizations. For all sets $S$ and functions $f:S\to\C$, we write the average of $f$ over $S$ as
\[
\E_{x\in S}f(x):=\f{1}{|S|}\sum_{x\in S}f(x).
\]
We will often write $\E_{x_1,\dots,x_m}$ in place of $\E_{(x_1,\dots,x_m)\in\F_q^m}$ when averaging over $\F_q^m$. For $f:\F_q\to\C$, we also set
\[
\|f\|_{L^2}^2:=\E_{x\in\F_q}|f(x)|^2.
\] 

Let $\widehat{\F}_q$ denote the group of additive characters of $\F_q$ and $1$ denote the trivial character. For any $\psi\in\widehat{\F}_q$, the Fourier transform of $f$ at $\psi$ is
\[
\hat{f}(\psi):=\E_{x\in\F_q}f(x)\overline{\psi(x)}.
\]
Then we have the Fourier inversion formula:
\[
f(x)=\sum_{\psi\in\widehat{\F}_q}\hat{f}(\psi)\psi(x)
\]
and Parseval's identity:
\[
\E_xf_1(x)\overline{f_2(x)}=\sum_{\psi\in\widehat{\F}_q}\hat{f_1}(\psi)\overline{\hat{f_2}(\psi)},
\]
so that
\[
\|f\|_{L^2}^2=\sum_{\psi\in\widehat{\F}_q}|\hat{f}(\psi)|^2.
\]

\subsection{The averages $\Lambda_{P_1,P_2}$ and $\Lambda_{P_1}$}

Fix $P_1,P_2\in\Z[y]$ such that $P_1(0)=P_2(0)=0$ and $P_1$ and $P_2$ are linearly independent. We write
\[
P_1=\sum_{i=1}^{r_1}a_iy^i\text{ and }P_2=\sum_{j=1}^{r_2}b_jy^j
\]
with $a_{r_1},b_{r_2}\neq 0$ and assume, without loss of generality, that $r_2\geq r_1$. By replacing $P_1$ by $P_1-P_2$ and $P_2$ by $-P_2$ if needed, we may also assume that $a_{r_1}\neq b_{r_1}$ if $r_1=r_2$. Since $P_1$ and $P_2$ are linearly independent, we have
\[
P_2':=P_2-P_1=\sum_{k=1}^{r_2}c_ky^k
\]
with $c_{r_2}\neq 0$, and, when $r_1=r_2$,
\[
P_2=\f{b_{r_2}}{a_{r_1}}P_1+P_3,
\]
where
\[
P_3=\sum_{\ell=1}^{r_3}d_{\ell}y^\ell
\]
with $r_1>r_3>0$ and $d_{r_3}\neq 0$.

Let $\F_q$ be a finite field. For any $f_0,f_1,f_2:\F_q\to\R$, we define
\[
\Lambda_{P_1,P_2}(f_0,f_1,f_2):=\E_{x,y\in\F_q}f_0(x)f_1(x+P_1(y))f_2(x+P_2(y))
\]
and
\[
\Lambda_{P_1}(f_0,f_1):=\E_{x,y\in\F_q}f_0(x)f_1(x+P_1(y)).
\]
Note that if $A,B,C\subset\F_q$, then $\Lambda_{P_1,P_2}(1_A,1_B,1_C)$ is the normalized count of the number of polynomial progressions $(x,x+P_1(y),x+P_2(y))$ in $A\times B\times C$. Let $\alpha,\beta,$ and $\gamma $ be the densities of $A,B,$ and $C$, respectively, in $\F_q$. Setting $f_B:=1_B-\beta$ and $f_C:=1_C-\gamma$, we see that
\[
|\Lambda_{P_1,P_2}(1_A,1_B,1_C)-\alpha\beta\gamma|\leq|\Lambda_{P_1,P_2}(1_A,1_B,f_C)|+\gamma|\Lambda_{P_1}(1_A,f_B)|.
\]
Indeed, this follows from the decomposition
\[
\Lambda_{P_1,P_2}(1_A,1_B,1_C)=\Lambda_{P_1,P_2}(1_A,1_B,f_C)+\Lambda_{P_1,P_2}(1_A,f_B,\gamma)+\Lambda_{P_1,P_2}(1_A,\beta,\gamma),
\]
and the fact that $\Lambda_{P_1,P_2}(1_A,f_B,\gamma)=\gamma\Lambda_{P_1}(1_A,f_B)$ and $\Lambda_{P_1,P_2}(1_A,\beta,\gamma)=\alpha\beta\gamma$. Thus, bounds on $\Lambda_{P_1,P_2}(1_A,1_B,f_C)$ and $\Lambda_{P_1}(1_A,f_B)$ yield a bound on the difference between the actual number of three-term progressions in $A\times B\times C$ and the expected number if $A$, $B$, and $C$ were random subsets of $\F_q$ of density $\alpha$, $\beta$, and $\gamma$, respectively.

Bounding $\Lambda_{P_1}(1_A,f_B)$ is quite simple. Let $f_0,f_1:\F_q\to\R$ be any two real-valued functions with $\E_xf_1(x)=0$. By Fourier inversion, we have
\begin{equation}\label{fi}
\Lambda_{P_1}(f_0,f_1)=\sum_{\psi_0,\psi_1\in\widehat{\F}_q}\widehat{f_0}(\psi_0)\widehat{f_1}(\psi_1)[\E_{x}\psi_0(x)\psi_1(x)][\E_y\psi_1(P_1(y))].
\end{equation}

The orthogonality relation for characters says that $\E_{x}\psi_0(x)\psi_1(x)$ equals $1$ if $\psi_0=\overline{\psi_1}$ and equals $0$ otherwise, and the Weil bound says that $\E_{y}\psi_1(P_1(y))\leq(\deg{P_1})q^{-1/2}$ whenever $\psi_1\neq 1$. Thus, since $\widehat{f_1}(1)=0$, it follows that~(\ref{fi}) is bounded above by
\[
(\deg{P_1})q^{-1/2}\sum_{\psi\in\widehat{\F}_q}\widehat{f_0}(\psi)\overline{\widehat{f_1}(\psi)}.
\]
By Parseval's identity and Cauchy-Schwarz, we have
\[
\sum_{\psi\in\widehat{\F}_q}\widehat{f_0}(\psi)\overline{\widehat{f_1}(\psi)}= \E_{x}f_0(x)f_1(x)\leq \|f_0\|_{L^2}\|f_1\|_{L^2}.
\]
Thus, $|\Lambda_{P_1}(f_0,f_1)|\leq (\deg{P_1})\|f_0\|_{L^2}\|f_1\|_{L^2}q^{-1/2}$.

Now, since $\|1_A\|_{L^2}=\alpha^{1/2}$ and $\|f_B\|_{L^2}=(\beta-\beta^2)^{1/2}$, we conclude that $|\Lambda_{P_1}(1_A,f_B)|\leq (\deg{P_1})\alpha^{1/2}\beta^{1/2} q^{-1/2}$, and thus that
\[
|\Lambda_{P_1,P_2}(1_A,1_B,1_C)-\alpha\beta\gamma|\leq|\Lambda_{P_1,P_2}(1_A,1_B,f_C)|+(\deg P_1)\alpha^{1/2}\beta^{1/2}\gamma q^{-1/2}.
\]
Theorem~\ref{main1} will thus be a consequence of the following result.
\begin{theorem}\label{main}
Suppose that $f_0,f_1,f_2:\F_q\to\R$ and $\E_x f_2(x)=0$. Then
\[
\Lambda_{P_1,P_2}(f_0,f_1,f_2)\ll_{P_1,P_2}\|f_0\|_{L^2}\|f_1\|_{L^2}\|f_2\|_{L^2}q^{-1/16}.
\]
\end{theorem}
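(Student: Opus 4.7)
The plan is to use Cauchy--Schwarz (together with carefully chosen shifts in $x$) to bound $\Lambda_{P_1,P_2}(f_0,f_1,f_2)$ in terms of an average depending only on $f_2$, of the form
\[
\E_x\E_{\mathbf{y}\in V_{P_1,P_2}(\F_q)}f_2(x)f_2(x+Q_{P_1,P_2}(\mathbf{y})),
\]
where $V_{P_1,P_2}\subset\A^n$ is an auxiliary algebraic variety and $Q_{P_1,P_2}$ a polynomial, both determined by $P_1$ and $P_2$. The map $(x,\mathbf{y})\mapsto(x,x+Q_{P_1,P_2}(\mathbf{y}))$ from $\F_q\times V_{P_1,P_2}(\F_q)$ to $\F_q^2$ should be close to equidistributed, so that combined with the mean-zero hypothesis $\E_xf_2(x)=0$ the above expression will be small, yielding the claimed $q^{-1/16}$ saving after the Cauchy--Schwarz steps are undone.

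\textbf{Cauchy--Schwarz steps.} I would first apply Cauchy--Schwarz in $x$ with $f_0$ as the outside factor:
\[
|\Lambda_{P_1,P_2}(f_0,f_1,f_2)|^2\le\|f_0\|_{L^2}^2\cdot\E_{x,y_1,y_2}f_1(x+P_1(y_1))f_1(x+P_1(y_2))f_2(x+P_2(y_1))f_2(x+P_2(y_2)),
\]
then translate $x\mapsto x-P_1(y_1)$ so that one of the $f_1$-factors sits at $x$:
\[
\|f_0\|_{L^2}^2\cdot\E_{x,y_1,y_2}f_1(x)f_1(x+P_1(y_2)-P_1(y_1))f_2(x+P_2'(y_1))f_2(x+P_2(y_2)-P_1(y_1)).
\]
A second Cauchy--Schwarz, applied in an appropriate subset of the variables, would then absorb the two $f_1$-factors into $\|f_1\|_{L^2}^2$; it would also duplicate the remaining $y$-variables, forcing the resulting tuple $\mathbf{y}$ to lie on an algebraic variety $V_{P_1,P_2}$ cut out by the relations produced by the change of variable. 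A final shift in $x$ combines the shifts of the two surviving $f_2$'s into a single polynomial $Q_{P_1,P_2}(\mathbf{y})$.

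\textbf{Fourier analysis.} Having reduced to the length-two average above, I would expand the outer $f_2$ via Fourier inversion and use character orthogonality in $x$:
\[
\E_x\E_{\mathbf{y}\in V_{P_1,P_2}(\F_q)}f_2(x)f_2(x+Q_{P_1,P_2}(\mathbf{y}))=\sum_{\psi\in\widehat{\F}_q}|\widehat{f_2}(\psi)|^2\E_{\mathbf{y}\in V_{P_1,P_2}(\F_q)}\psi(Q_{P_1,P_2}(\mathbf{y})).
\]
The trivial character contributes nothing since $\widehat{f_2}(1)=0$, and for each nontrivial $\psi$ a Weil/Deligne-type estimate should bound the inner character sum by an appropriate power of $q^{-1/2}$. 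Parseval's identity then converts this into a bound of the required shape, and unwinding the Cauchy--Schwarz steps produces the final $q^{-1/16}$ saving.

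\textbf{Main obstacle.} The main difficulty will be the algebraic non-degeneracy of $Q_{P_1,P_2}$ on $V_{P_1,P_2}$ needed to apply the Weil/Deligne estimate: for each nontrivial additive character $\psi$, the composition $\psi\circ Q_{P_1,P_2}$ must not be cohomologically trivial on any irreducible component of $V_{P_1,P_2}$ (e.g.\ $Q_{P_1,P_2}$ must not be of the form $g^p-g+c$ on any component). Establishing this non-degeneracy is the content of Section~\ref{3}, and it requires translating the linear independence of $P_1$ and $P_2$ (together with the large-characteristic hypothesis on $\F_q$) into a concrete algebraic-geometric statement about $V_{P_1,P_2}$ and $Q_{P_1,P_2}$. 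Indeed, the Cauchy--Schwarz scheme in Section~\ref{2} has to be designed precisely so that this non-degeneracy is accessible.
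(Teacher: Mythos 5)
Your high-level strategy is exactly the paper's: successive Cauchy--Schwarz applications to reduce the tri-linear average to a two-point average $\E_{x}\E_{\mathbf{y}\in V_{P_1,P_2}(\F_q)}f_2(x)f_2(x+Q_{P_1,P_2}(\mathbf{y}))$ over an auxiliary variety, followed by Fourier inversion, Parseval, and a Weil/Deligne-type character-sum estimate on $V_{P_1,P_2}$, with the key technical burden being the algebraic non-degeneracy of $Q_{P_1,P_2}$ on $V_{P_1,P_2}$. The Fourier step you write down is correct verbatim.

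However, your Cauchy--Schwarz scheme is miscounted. After the first Cauchy--Schwarz (absorbing $f_0$) and the shift $x\mapsto x-P_1(y_1)$, you correctly have two $f_1$-factors and two $f_2$-factors. But a second Cauchy--Schwarz absorbing the $f_1$'s necessarily \emph{doubles} the $y$-variables and hence the $f_2$-factors: you are left with \emph{four} $f_2$'s (over the fiber product $\F_q^2\times_{T_1}\F_q^2$ where $T_1(y_1,y_2)=P_1(y_2)-P_1(y_1)$), not two. No shift in $x$ can bring four $f_2$'s down to a two-point average. A \emph{third} Cauchy--Schwarz is needed: shift so that one $f_2$ sits at $x$, group the fiber-product variable by the pair $(P_2'(y_3)-P_2'(y_1),\,P_2(y_2)-P_2(y_1))$, and apply Cauchy--Schwarz once more with $f_2(x)f_2(x+z)f_2(x+z')$ as the outer factor; this absorbs $\|f_2\|_{L^2}^6$, doubles the $y$-variables to $8$, and produces the variety $V_{P_1,P_2}\subset\A^8$ and the polynomial $Q_{P_1,P_2}$. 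The exponent confirms this: three Cauchy--Schwarz applications give $|\Lambda_{P_1,P_2}|^8\ll\|f_0\|_{L^2}^8\|f_1\|_{L^2}^8\|f_2\|_{L^2}^6|\Lambda'_{P_1,P_2}(f_2,f_2)|$, and a $q^{-1/2}$ saving on the two-point average yields $(q^{-1/2})^{1/8}=q^{-1/16}$; your two-step scheme cannot produce this exponent.

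Beyond that, the dominant content of the proof --- showing $\dim V_{P_1,P_2}(\bar\F_q)\leq 4$, that $|V_{P_1,P_2}(\F_q)|\asymp q^4$, and that the fiber-product variety $W_{P_1,P_2}$ (encoding equality of $Q_{P_1,P_2}$ on two copies of $V_{P_1,P_2}$) has dimension at most $7$, so that Kowalski's form of the Weil/Deligne estimate applies --- is flagged as the ``main obstacle'' but not attempted. This is where all the genuine difficulty lies: the paper carries it out via leading-term-ideal computations in carefully chosen graded monomial orders, slicing $W_{P_1,P_2}$ by seven hyperplanes and splitting into the cases $r_1<r_2$ and $r_1=r_2$, with the linear independence of $P_1,P_2$ entering concretely through the non-vanishing of certain resultant-like quantities over $\bar\Q$. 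Identifying the obstacle is a good start, but without this argument the proof is incomplete.
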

The proof of Theorem~\ref{main} proceeds by bounding $\Lambda_{P_1,P_2}(f_0,f_1,f_2)$ in terms of an average $\Lambda'_{P_1,P_2}(f_2,f_2)$ over a polynomial progression of length two,
\[
x,x+Q_{P_1,P_2}(y),
\]
similar to the average $\Lambda_{P_1}(f_0,f_1)$. The only difference is that $y$ does not vary over $\F_q$. It instead varies over the $\F_q$-points $V_{P_1,P_2}(\F_q)$ of an affine variety that depends on $P_1$ and $P_2$. We can then bound $\Lambda'_{P_1,P_2}(f_2,f_2)$ in the same manner that we bounded $\Lambda_{P_1}(f_0,f_1)$, provided we have a nontrivial bound, uniform in $q$, for the character sum
\[
\sum_{y\in V_{P_1,P_2}(\F_q)}\psi(Q_{P_1,P_2}(y))
\]
whenever $\psi\in\widehat{\F}_q\sm\{1\}$. The remainder of this section will focus on bounding $\Lambda_{P_1,P_2}(f_0,f_1,f_2)$ in terms of $\Lambda_{P_1,P_2}'(f_2,f_2)$, which we will define next.

\subsection{Bounding $\Lambda_{P_1,P_2}$}
We define polynomials $R_{P_1,P_2}^{(1)},R_{P_1,P_2}^{(2)},R_{P_1,P_2}^{(3)},R_{P_1,P_2}^{(4)}$, and $Q_{P_1,P_2}\in \Z[y_1,\dots,y_8]$ by
\[
R_{P_1,P_2}^{(1)}:=P_1(y_4)-P_1(y_3)-P_1(y_2)+P_1(y_1),
\]
\[
R_{P_1,P_2}^{(2)}:=P_1(y_8)-P_1(y_7)-P_1(y_6)+P_1(y_5),
\]
\[
R_{P_1,P_2}^{(3)}:=P_2(y_6)-P_2(y_5)-P_2(y_2)+P_2(y_1),
\]
\[
R_{P_1,P_2}^{(4)}:=P_2'(y_7)-P_2'(y_5)-P_2'(y_3)+P_2'(y_1),
\]
and
\[
Q_{P_1,P_2}:=P_2(y_8)-P_2(y_7)-P_2(y_4)+P_2(y_3).
\]
For any field $\F$, set
\[
V_{P_1,P_2}(\F):=\{y\in\F^8:R_{P_1,P_2}^{(i)}(y)=0\text{ for }i=1,2,3,4\}
\]
and
\[
W_{P_1,P_2}(\F):=\{y\in V_{P_1,P_2}(\F)^2: Q_{P_1,P_2}(y_1,\dots,y_8)=Q_{P_1,P_2}(y_9,\dots,y_{16})\}.
\]
For any $f_0,f_1:\F_q\to\R$, we define
\[
\Lambda'_{P_1,P_2}(f_0,f_1):=\E_{x\in\F_q, y\in V_{P_1,P_2}(\F_q)}f_0(x)f_1(x+Q_{P_1,P_2}(y)).
\]
The following proposition bounds $\Lambda_{P_1,P_2}(f_0,f_1,f_2)$ in terms of $\Lambda'_{P_1,P_2}(f_2,f_2)$.
\begin{proposition}\label{2pt}
Suppose that $f_0,f_1,f_2:\F_q\to\R$. Then
\[
\Lambda_{P_1,P_2}(f_0,f_1,f_2)\leq \f{|V_{P_1,P_2}(\F_q)|}{q^4}\|f_0\|_{L^2}\|f_1\|_{L^2}\|f_2\|_{L^2}^{3/4}|\Lambda'_{P_1,P_2}(f_2,f_2)|^{1/8}.
\]
\end{proposition}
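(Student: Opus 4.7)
The plan is to perform three successive Cauchy–Schwarz inequalities in $x$ interleaved with polynomial substitutions, each of which both extracts an $L^2$ norm of one of $f_0,f_1,f_2$ and doubles the ambient $y$–variable. After three doublings the integration variable runs over a tuple $(y_1,\dots,y_8)$, which I would think of as indexed by the vertices of the cube $\{0,1\}^3$. Concretely, one first substitutes $x\mapsto x-P_1(y)$ to turn $f_1(x+P_1(y))$ into $f_1(x)$ and applies Cauchy–Schwarz in $x$ to extract $\|f_1\|_{L^2}$ and double $y$ along, say, the third cube direction; a subsequent shift in $x$ brings out $f_0(x)$, and a second Cauchy–Schwarz extracts $\|f_0\|_{L^2}$ and doubles in a second direction; a third shift and Cauchy–Schwarz handle $f_2$ and double in the remaining direction.

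The central combinatorial point is that the cumulative shifts in $x$ across these three steps, combined with the three perpendicular doublings, should force the four alternating sums $R^{(1)},R^{(2)},R^{(3)},R^{(4)}$—the ``discrete second derivatives'' of $P_1,P_1,P_2,P_2'$ on four two-dimensional faces of the cube—to appear as the arguments of the surviving copies of $f_0$ and $f_1$, while the two surviving copies of $f_2$ differ precisely by the fifth alternating sum $Q_{P_1,P_2}(y_1,\dots,y_8)$. The reason that the polynomials $P_1,P_2,P_2'$ appear attached to specific faces is that each cube direction inherits one of these polynomials from the particular substitution in $x$ that was used to pull out $f_1$, $f_0$, or $f_2$ at the corresponding Cauchy–Schwarz step.

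To conclude, I would use the identity
\[
\E_{y\in\F_q^{8}}\mathbf{1}_{V_{P_1,P_2}(\F_q)}(y)\,h(y)=\frac{|V_{P_1,P_2}(\F_q)|}{q^{8}}\,\E_{y\in V_{P_1,P_2}(\F_q)}h(y)
\]
to convert the 8-variable $\F_q$-average produced by the three Cauchy–Schwarz steps into an average over $V_{P_1,P_2}(\F_q)$, on which the arguments of the surviving $f_0,f_1$ factors collapse to $x$ and are absorbed into the already-extracted norms $\|f_0\|_{L^2},\|f_1\|_{L^2}$, while the two $f_2$-factors contribute exactly $\Lambda'_{P_1,P_2}(f_2,f_2)$. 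Balancing these against the three norm extractions, taking an eighth root, and matching exponents then yields the coefficient $|V_{P_1,P_2}(\F_q)|/q^{4}$ and the $\|f_2\|_{L^2}^{3/4}|\Lambda'_{P_1,P_2}(f_2,f_2)|^{1/8}$ dependence claimed. The principal obstacle will be the algebraic book-keeping at the second and especially third Cauchy–Schwarz step: one must verify that, after all three substitutions, the arguments of every leftover $f_0$ and $f_1$ factor are integer linear combinations of $R^{(1)},\dots,R^{(4)}$, and that the two surviving copies of $f_2$ differ by exactly $Q_{P_1,P_2}$.
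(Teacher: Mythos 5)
Your overall strategy is exactly the paper's: three successive Cauchy--Schwarz steps, each preceded by a shift in $x$ and each doubling the $y$-variables, so that after three steps one has a cube $(y_1,\dots,y_8)$ whose face-constraints define $V_{P_1,P_2}$ and whose ``diagonal face'' gives $Q_{P_1,P_2}$. That matches the paper in spirit, and your account of how the fibering forces the $R^{(i)}$ to appear and of the exponent bookkeeping ($|\Lambda|^8\le\|f_0\|^8\|f_1\|^8\|f_2\|^6\cdot(|V|/q^4)\Lambda'$, then take eighth roots) is correct in outline.

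However, your proposed \emph{order} of the Cauchy--Schwarz steps is $f_1$, then $f_0$, then $f_2$, whereas the paper applies Cauchy--Schwarz first against the raw factor $f_0(x)$ with no preliminary substitution, then against $f_1$, then against $f_2$. This is not a cosmetic choice: your very first substitution $x\mapsto x-P_1(y)$ replaces $f_2(x+P_2(y))$ by $f_2(x+P_2'(y))$, where $P_2'=P_2-P_1$, so from that point on the ``reference polynomial'' seen by $f_2$ is $P_2'$ rather than $P_2$. If you carry out the remaining two steps (shift to bring out $f_0$, fiber over $P_1(y_1)-P_1(y_2)$; then shift to bring out $f_2$, fiber over $P_2'(y_2)-P_2'(y_1)$ and $P_2(y_3)-P_2(y_1)$), the doubled constraints you obtain agree with $R^{(1)},R^{(2)}$ but have $P_2$ and $P_2'$ \emph{interchanged} relative to the paper's $R^{(3)},R^{(4)}$, and the two surviving $f_2$-arguments differ by
\[
P_2'(y_8)-P_2'(y_7)-P_2'(y_4)+P_2'(y_3),
\]
not by $Q_{P_1,P_2}=P_2(y_8)-P_2(y_7)-P_2(y_4)+P_2(y_3)$. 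So your argument, executed as described, proves a statement of the same shape but for a genuinely different variety and polynomial, not the stated bound in terms of $V_{P_1,P_2}$ and $\Lambda'_{P_1,P_2}$ as defined earlier in the paper. To land on the paper's exact $V_{P_1,P_2}$ and $Q_{P_1,P_2}$, do the first Cauchy--Schwarz against $f_0(x)$ \emph{before} any substitution (this also simplifies the start, since no shift is needed to produce a ``bare'' factor), then shift $x\mapsto x-P_1(y_1)$ and fiber over $P_1(y_2)-P_1(y_1)$ to extract $\|f_1\|$, and finally shift $x\mapsto x-P_2'(y_1)$ and fiber over $P_2'(y_3)-P_2'(y_1)$ and $P_2(y_2)-P_2(y_1)$. (One further small point: the eighth root actually produces the factor $(|V_{P_1,P_2}(\F_q)|/q^4)^{1/8}$; the stated weaker form with exponent $1$ uses $|V_{P_1,P_2}(\F_q)|\ge q^4$, which the paper verifies later.)
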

We will show in Section~\ref{3} that $\dim V_{P_1,P_2}(\bar\F_q)\leq 4$ whenever the characteristic of $\F_q$ is sufficiently large. This, combined with the Lang-Weil bound~\cite{LW}, implies that $|V_{P_1,P_2}(\F_q)|\ll_{P_1,P_2}q^4$. Thus, Proposition~\ref{2pt} does indeed bound $\Lambda_{P_1,P_2}(f_0,f_1,f_2)$ in terms of $\Lambda_{P_1,P_2}'(f_2,f_2)$.

Before proving Proposition~\ref{2pt}, we will illustrate the main way in which Cauchy-Schwarz is used in the proof. Let $R_1,\dots,R_m\in\Z[y_1,\dots,y_n]$ and $S\subset\F_q^n$, and suppose that we want to bound
\begin{equation}\label{gen}
\E_{x\in\F_q,y\in S}f_0(x)f_1(x+R_1(y))\cdots f_m(x+R_m(y)),
\end{equation}
where $f_0,\dots,f_m:\F_q\to\R$ and $\|f_i\|_\infty\leq 1$ for $i=0,\dots,m$.

We can rewrite~(\ref{gen}) as follows by collecting together the elements of $S$ in each fiber of $R_1$:
\[
\f{1}{q^2}\sum_{x,z\in\F_q}f_0(x)f_1(x+z)\f{1}{|S|/q}\sum_{\substack{y\in S \\ R_1(y)=z}}f_2(x+R_2(y))\cdots f_m(x+R_m(y)).
\]
Applying Cauchy-Schwarz in the outer sum, we bound the modulus squared of the above by
\[
\f{1}{q^2}\sum_{x,z\in\F_q}\f{1}{|S|^2/q^2}\sum_{\substack{y_1,y_2\in S \\ R_1(y_1)=z \\ R_1(y_2)=z}}f_2(x+R_2(y_1))f_2(x+R_2(y_2))\cdots f_m(x+R_m(y_2)),
\]
which, summing the interior sum over $z\in\F_q$, equals
\[
\f{1}{q}\sum_{x\in\F_q}\f{1}{|S|^2/q}\sum_{\substack{y_1,y_2\in S \\ R_1(y_1)=R_1(y_2)}}f_2(x+R_2(y_1))f_2(x+R_2(y_2))\cdots f_m(x+R_m(y_2)).
\]
Note that the inner sum is the sum over $y\in S\times_{R_1}S$, where $S\times_{R_1}S=\{(y_1,y_2)\in S^2:R_1(y_1)=R_1(y_2)\}$ is the fiber product over $R_1$ of the set $S$ with itself. So if $|S\times_{R_1}S|\ll |S|^2/q$, then~(\ref{gen}) is
\[
\ll|\E_{\substack{x\in\F_q \\ y\in S\times_{R_1}S}}f_2(x+R_2(y_1))f_2(x+R_2(y_2))\cdots f_m(x+R_m(y_2))|^{1/2}.
\]
Thus, Cauchy-Schwarz can be used to bound an average over $x\in\F_q$ and $y\in S$ in terms of an average over $x\in\F_q$ and $(y_1,y_2)$ in some fiber product of $S$ with itself.
\begin{proof}[Proof of Proposition~\ref{2pt}]
By Cauchy-Schwarz, $|\Lambda_{P_1,P_2}(f_0,f_1,f_2)|^2$ is bounded above by
\[
\|f_0\|^2_{L^2}\E_{x,y_1,y_2}f_1(x+P_1(y_1))f_1(x+P_1(y_2))f_2(x+P_2(y_1))f_2(x+P_2(y_2)).
\]
After the change of variables $x\mapsto x-P_1(y_1)$, the average above becomes
\[
\E_{x,y_1,y_2}f_1(x)f_1(x+P_1(y_2)-P_1(y_1))f_2(x+P_2(y_1)-P_1(y_1))f_2(x+P_2(y_2)-P_1(y_1)).
\]
We rewrite this by collecting together $(y_1,y_2)\in\F_q^2$ in the same fiber of $T_1(y_1,y_2):=P_1(y_2)-P_1(y_1)$:
\[
\f{1}{q^2}\sum_{x,z\in\F_q}f_1(x)f_1(x+z)\f{1}{q}\sum_{\substack{y_1,y_2\in\F_q \\ T_1(y_1,y_2)=z}}f_2(x+P_2(y_1)-P_1(y_1))f_2(x+P_2(y_2)-P_1(y_1)).
\]
Then $|\Lambda_{P_1,P_2}(f_0,f_1,f_2)|^4/(\|f_0\|_{L^2}^4\|f_1\|_{L^2}^4)$ is bounded above by
\[
\f{1}{q^4}\sum_{x,z\in\F_q}\sum_{\substack{y\in\F_q^4 \\ T_1(y_1,y_2)=z \\ T_1(y_3,y_4)=z}}\prod_{i=0}^1f_2(x+P_2(y_{1+2i})-P_1(y_{1+2i}))f_2(x+P_2(y_{2+2i})-P_1(y_{1+2i})),
\]
by Cauchy-Schwarz. Summing the inner sum over $z\in \F_q$, this equals
\begin{equation}\label{2pt1}
\f{1}{q^4}\sum_{\substack{x\in\F_q \\ y\in \F_q^2\times_{T_1}\F_q^2}}\prod_{i=0}^1f_2(x+P_2(y_{1+2i})-P_1(y_{1+2i}))f_2(x+P_2(y_{2+2i})-P_1(y_{1+2i})).
\end{equation}
After making the change of variables $x\mapsto x-P_2(y_1)+P_1(y_1)$, we can rewrite~(\ref{2pt1}) by collecting together $y\in \F_q^2\times_{T_1}\F_q^2$ with the same values of $T_2(y):=P_2'(y_3)-P_2'(y_1)$ and $T_3(y):=P_2(y_2)-P_2(y_1)$:
\begin{equation}\label{2pt2}
\f{1}{q^4}\sum_{x,z,z'\in\F_q }f_2(x)f_2(x+z)f_2(x+z')\sum_{\substack{y\in \F_q^2\times_{T_1}\F_q^2 \\ T_2(y)=z \\ T_3(y)=z'}}f_2(x+z+P_2(y_4)-P_2(y_3)).
\end{equation}

Applying Cauchy-Schwarz to the outer sum in~(\ref{2pt2}) thus shows that
\[
\f{|\Lambda_{P_1,P_2}(f_0,f_1,f_2)|^8}{\|f_0\|_{L^2}^{8}\|f_1\|_{L^2}^{8}\|f_2\|_{L^2}^6}
\]
is bounded above by
\[
\f{1}{q^5}\sum_{x,z,z'\in\F_q}\sum_{\substack{y\in \F_q^2\times_{T_1}\F_q^2 \\ T_2(y_1,y_2,y_3,y_4)=z \\ T_2(y_5,y_6,y_7,y_8)=z \\ T_3(y_1,y_2,y_3,y_4)=z' \\ T_3(y_5,y_6,y_7,y_8)=z'}}f_2(x+z+P_2(y_4)-P_2(y_3))f_2(x+z+P_2(y_8)-P_2(y_7)).
\]
Making the change of variables $x\mapsto x-z-P_2(y_4)+P_2(y_3)$ and summing the inner sum over $(z,z')\in \F_q^2$, the above becomes
\[
\f{|V_{P_1,P_2}(\F_q)|}{q^4}\Lambda'_{P_1,P_2}(f_2,f_2).
\]
\end{proof}

\section{Dimension bounds}\label{3}
The main goal of this section is to prove a power-saving bound for the sum
\begin{equation}\label{charbd}
\sum_{y\in V_{P_1,P_2}(\F_q)}\psi(Q_{P_1,P_2}(y))
\end{equation}
whenever $\psi$ is a non-trivial additive character of $\F_q$. When the characteristic of $\F_q$ is large enough and $Q_{P_1,P_2}$ is not constant on the smooth points of any irreducible component of $V_{P_1,P_2}(\bar\F_q)$, then such a bound should follow from Deligne's theorem and the Grothendieck-Lefschetz trace formula. Indeed, in Proposition 9 of~\cite{Ko}, Kowalski has already carried this argument out in general. Kowalski's proposition is phrased in terms of $q$-Weil numbers, so for the convenience of the reader we state below an immediate consequence of it.
\begin{proposition}[Kowalski, Proposition 9.ii of~\cite{Ko}]\label{charsum}
Let $V\subset\A_{\Z}^n$ be an affine subscheme and $F,G\in\Z[V]$ be regular functions on $V$. Suppose that $\psi$ and $\chi$ are additive and multiplicative characters of $\F_q$, respectively. There exists an $\eta_V>0$ depending only on $V$ and a $c_{V,\deg F,\deg{G}}>0$ depending only on $V$, $\deg F$, and $\deg G$ such that if
\[
|F\1(a)|\leq\eta_V|V(\F_q)|
\]
for every $a\in\F_q$ and the characteristic of $\F_q$ is at least $c_{V,\deg{F},\deg{G}}$, then
\[
\sum_{x\in V(\F_q)}\psi(F(x))\chi(G(x))\ll_{V,\deg{F},\deg{G}} q^{\dim{V(\bar\F_q)}-1/2}
\]
whenever $\psi$ is nontrivial.
\end{proposition}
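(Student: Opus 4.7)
The plan is to derive this from the Grothendieck--Lefschetz trace formula together with Deligne's theorem on weights (Weil II). Writing $d = \dim V(\bar\F_q)$, the sum $\sum_{x \in V(\F_q)} \psi(F(x)) \chi(G(x))$ equals the alternating sum of traces of geometric Frobenius on the compactly supported $\ell$-adic cohomology groups $H^i_c(V_{\bar\F_q}, \mathcal{G})$, where $\mathcal{G}$ is the rank-one lisse sheaf $F^*\mathcal{L}_\psi \otimes G^*\mathcal{L}_\chi$, the tensor product of the pullback along $F$ of the Artin--Schreier sheaf attached to $\psi$ with the pullback along $G$ of the Kummer sheaf attached to $\chi$. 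The large-characteristic hypothesis $\Char(\F_q) \geq c_{V, \deg F, \deg G}$ is there to ensure good reduction of $V$ from its model over $\Z$, to guarantee that $V_{\bar\F_q}$ has the same geometric component structure as the characteristic-zero generic fiber, and to make Deligne's purity theorem and uniform Betti number bounds applicable.

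After reducing to each top-dimensional geometric component $V_i$ of $V_{\bar\F_q}$ separately, Deligne's theorem gives that the Frobenius eigenvalues on $H^j_c(V_i, \mathcal{G})$ have absolute value at most $q^{j/2}$. The contribution of the non-top groups (those with $j < 2d$) is therefore $O_{V, \deg F, \deg G}(q^{d-1/2})$, provided the sum of the Betti numbers $\dim H^j_c$ is bounded in terms of $V$, $\deg F$, and $\deg G$ only. This uniform boundedness follows from constructibility and a spreading-out argument for the scheme $V \subset \A_\Z^n$ together with Katz-type estimates for the Betti numbers of such sheaves in families of bounded complexity.

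The main obstacle is to show that the top cohomology $H^{2d}_c(V_i, \mathcal{G})$ vanishes for every top-dimensional $V_i$. After Tate twist, this group is the space of $\pi_1^{\text{geom}}(V_i^{\mathrm{sm}})$-coinvariants of $\mathcal{G}|_{V_i^{\mathrm{sm}}}$, so it vanishes iff $\mathcal{G}$ has nontrivial geometric monodromy on $V_i$. When $F$ is non-constant on $V_i$, the pulled-back Artin--Schreier factor is wildly ramified at infinity while the Kummer factor is tame, so their tensor product cannot be geometrically trivial---this uses that $\psi$ is nontrivial. Thus the obstruction reduces entirely to ruling out the case that $F$ is constant on some top-dimensional geometric component of $V_{\bar\F_q}$.

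This is where the fiber hypothesis $|F^{-1}(a)| \leq \eta_V |V(\F_q)|$ enters. By Lang--Weil, each top-dimensional geometric component contributes $q^d + O_V(q^{d-1/2})$ points to $V(\F_q)$, so $|V(\F_q)| = C_V q^d + O_V(q^{d-1/2})$ where $C_V$ is the (finite, $V$-bounded) number of such components. If $F$ were constant on some $V_i$ with value $a$, then $|F^{-1}(a)| \geq |V_i(\F_q)| \geq \tfrac{1}{2} q^d$ for $q$ large, forcing $|F^{-1}(a)|/|V(\F_q)| \geq 1/(2C_V) + o(1)$; choosing $\eta_V$ smaller than this absolute lower bound (a quantity depending only on $V$) yields a contradiction. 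I expect the delicate technical point to be the uniform control on Betti numbers as $q$ varies, which requires invoking constructibility of a suitable sheaf on an appropriate parameter scheme rather than a computation specific to any individual $F$ and $G$.
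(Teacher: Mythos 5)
The paper does not prove this proposition; it states it verbatim as ``an immediate consequence'' of Proposition~9 of Kowalski's paper~\cite{Ko}, where, as the text says, Kowalski ``has already carried this argument out in general.'' There is therefore no proof in the paper to compare against. That said, your outline is the standard Grothendieck--Lefschetz/Deligne mechanism for such sums and is almost certainly close to Kowalski's own argument: express the sum as an alternating trace on $H^i_c(V_{\bar\F_q}, F^*\mathcal{L}_\psi\otimes G^*\mathcal{L}_\chi)$, handle $i\le 2d-1$ via Weil~II together with a uniform (Katz-type) bound on the total Betti number in terms of $V$, $\deg F$, $\deg G$, and then show that $H^{2d}_c$ does not contribute.

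One imprecision in the last step is worth correcting. You frame it as proving $H^{2d}_c(V_i,\mathcal{G})=0$ on \emph{every} top-dimensional geometric component $V_i$, ruling out ``$F$ constant on $V_i$'' via the fiber hypothesis. As stated this cannot work for a component $V_i$ not defined over $\F_q$: there $F$ could be constant with value $a\in\bar\F_q\setminus\F_q$, the hypothesis $|F^{-1}(a)|\le\eta_V|V(\F_q)|$ is only assumed for $a\in\F_q$, and $|V_i(\F_q)|$ need not be $\gg q^d$ (it can even be empty). What saves you is that such a $V_i$ lies in a Frobenius orbit of components of size $>1$; even if $H^{2d}_c(V_i,\mathcal{G})\ne 0$, the trace of $\mathrm{Frob}_q$ on the direct sum over that orbit vanishes because Frobenius permutes the summands with no fixed block. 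So the correct reduction is: only Frobenius-stable top-dimensional components can contribute to the trace on $H^{2d}_c$, and for those, $F$ constant forces $a\in\F_q$, Lang--Weil gives $|V_i(\F_q)|\gg_V q^d$, and choosing $\eta_V$ below the reciprocal of (a small multiple of) the number of geometric components of the generic fiber yields the contradiction. Likewise, your $C_V$ in the Lang--Weil count should be the number of $\F_q$-rational top-dimensional components, which may vary with $q$ but is always between~$1$ and a $V$-bounded quantity, so the choice of $\eta_V$ still goes through. With these adjustments the sketch is sound, modulo the (standard but nontrivial) uniform Betti number input you already flag.
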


To check that the hypotheses of Proposition~\ref{charsum} are satisfied, we will show that $\dim V_{P_1,P_2}(\bar\F_q)\leq 4$ and that all of the fibers of $Q_{P_1,P_2}:V_{P_1,P_2}(\bar\F_q)\to\bar{\F}_q$ have dimension at most $3$. This second fact follows from the bound $\dim W_{P_1,P_2}(\bar\F_q)\leq 7$, which we will prove instead of bounding the dimension of the fibers directly. We discuss the reasoning for this prior to the proof of Lemma~\ref{dimbound}.

Our main tool in this section will be the connection between a variety's Hilbert polynomial and its dimension, and we will first briefly review the definitions needed to describe this connection. The following standard material can be found in Chapters 9--11 of~\cite{K} and Chapters 2 and 9 of~\cite{CLO}, for example.

\subsection{Preliminaries}

Let $\F$ be a field and $n\in\N$, and let $\N_0:=\N\cup\{0\}$ denote the nonnegative integers. For any $\alpha=(\alpha_1,\dots,\alpha_n)\in\N_0^n$, we set
\[
y^\alpha:=\prod_{i=1}^n y_i^{\alpha_i}
\]
in the ring $\F[y_1,\dots,y_n]$. The quantity $|\alpha|:=\alpha_1+\dots+\alpha_n$ is the \textit{degree} of the monomial $y^\alpha$. Let $M_n$ denote the set of monomials in the variables $y_1,\dots,y_n$:
\[
M_n:=\{y^\alpha:\alpha\in\N_0^n\}.
\]
Any element $G\in\F[y_1,\dots,y_n]$ may be written as
\begin{equation}\label{poly}
G=\sum_{Y\in M_n}a_Y Y,
\end{equation}
where $a_\alpha=0$ for all but finitely many $\alpha\in\N_0^n$. We say that a monomial $Y\in M_n$ \textit{appears} in $G$ if $a_Y\neq 0$ in the expression~(\ref{poly}). For example, $y_1y_2$ appears in $y_1^2+5y_1y_2$, but $y_1y_2$ does not appear in $y_1^2+y_1y_2-y_1y_2$. The \textit{degree} of $G$ is the maximum degree of all monomials appearing in $G$. For $G'\in\F[y_1,\dots,y_n]$, we will write
\[
G=G'+\text{lower degree terms}
\]
to mean that $G=G'+G''$ for some $G''\in\F[y_1,\dots,y_n]$ such that
\[
\deg{G''}<\min\{\deg{Y}:Y\in M_n\text{ appears in }G'\}.
\]

In order to define the leading term of a multivariate polynomial, we must specify an ordering of the set of monomials $M_n$. The most useful orders on $M_n$ are \textit{monomial orders}, which are those that respect multiplication of monomials.
\begin{definition}[Monomial order]
A \emph{monomial order} $>$ on $\F[y_1,\dots,y_n]$ is a total order on $M_n$ that satisfies
\begin{enumerate}
\item $y^\alpha>1$ for all $y^\alpha\in M_n\sm\{1\}$, and
\item if $y^\alpha>y^\beta$, then $y^\alpha\cdot y^\gamma>y^\beta\cdot y^\gamma$ whenever $y^\alpha,y^\beta,y^\gamma\in M_n$.
\end{enumerate}
A monomial order $>$ is \emph{graded} if $y^\alpha>y^\beta$ whenever $|\alpha|>|\beta|$.
\end{definition}
It is an easy consequence of the Hilbert basis theorem that any monomial ordering is a well-ordering.

Once we have specified a monomial order on $\F[y_1,\dots,y_n]$, we may write any $G\in\F[y_1,\dots,y_n]$ as
\[
G=a_{\alpha^1}y^{\alpha^1}+\dots+a_{\alpha^m}y^{\alpha^m},
\]
where $y^{\alpha^1}>\dots>y^{\alpha^m}$ and $a_{\alpha^1}\neq 0$. Then the \textit{leading term} of $G$ is
\[
\lt(G):= a_{\alpha^1}y^{\alpha^1}
\]
and the \textit{leading monomial} of $G$ is
\[
\lm(G):= y^{\alpha^1}.
\]
An important concept for us will be the ideal of leading terms of a set:
\begin{definition}[Leading term ideal]
Let $\F$ be a field and $n\in\N$, and fix a monomial order on $\F[y_1,\dots,y_n]$. For any $S\subset\F[y_1,\dots,y_n]$, the \emph{leading term ideal} of $S$ is
\[
\lt(S):=\langle\{\lt(H):H\in S\}\rangle.
\]
\end{definition}

Now, for any $S\subset\F[y_1,\dots,y_n]$, set
\[
V(S):=\{y\in\F^n:H(y)=0\text{ for all }H\in S\}.
\]
For any $s\in\N_0$, let $\F[y_1,\dots,y_n]_{\leq s}$ denote the $\F$-vector space of polynomials in $\F[y_1,\dots,y_n]$ of degree at most $s$, and for any ideal $I\subset\F[y_1,\dots,y_n]$, set $I_{\leq s}:=I\cap\F[y_1,\dots,y_n]_{\leq s}$. When $s$ is sufficiently large depending on $I$, the \textit{affine Hilbert function} of $I$,
\[
\HF_I(s):=\dim_\F\F[y_1,\dots,y_n]_{\leq s}/I_{\leq s}
\]
equals a polynomial $\HP_I$ called the \textit{affine Hilbert polynomial} of $I$.

Fix a graded monomial order on $\F[y_1,\dots,y_n]$. Then $\HF_I=\HF_{\lt(I)}$, and when $\F$ is algebraically closed, we have that $\deg\HP_I=\dim V(I)$ as well. Thus, 
\[
\dim V(I)=\deg\HP_{\lt(I)}
\]
when $\F$ is algebraically closed. The degree of $\HP_{\lt(I)}$ is easy to compute if one knows a generating set for $\lt(I)$. As one important special case, if there exist $G_1,\dots,G_n\in I$ and $\alpha_1,\dots,\alpha_n>0$ such that
\[
\lm(G_i)=y_i^{\alpha_i}
\]
for each $i=1,\dots,n$, then $\dim V(I)=0$.

Our choice of graded monomial order will not have much of an impact on our arguments bounding $\dim V_{P_1,P_2}(\bar\F_q)$ and $\dim W_{P_1,P_2}(\bar\F_q)$. For this reason, we will use the graded lexicographic order, which is simple to describe.
\begin{definition}
Let $\F$ be a field and $n\in\N$. The \emph{graded lexicographic order} (abbreviated \emph{grlex}) with $y_1>\dots>y_n$ is defined as follows. We have $y^\alpha>y^\beta$ if
\begin{enumerate}
\item $|\alpha|>|\beta|$, or
\item $|\alpha|=|\beta|$ and $\alpha_{i_0}>\beta_{i_0}$, where $i_0\in[n]$ is the smallest index $i$ for which $\alpha_i\neq\beta_i$.
\end{enumerate}
\end{definition}
For example, $y_1^2y_2y_3>y_1^2y_3^2$ with respect to the grlex ordering with $y_1>y_2>y_3$.

\subsection{Bounding the dimension of $V_{P_1,P_2}(\bar\F_q)$ and $W_{P_1,P_2}(\bar\F_q)$}

It is now immediate that $\dim{V_{P_1,P_2}(\bar\F_q)}\leq 4$ when $\Char\F_q\gg_{P_1,P_2}1$. Indeed, put the grlex order with
\[
y_8>y_4>y_7>y_3>y_6>y_2>y_5>y_1
\]
on $\bar\F_q[y_1,\dots,y_8]$, and let
\[
I=\langle R_{P_1,P_2}^{(1)},R_{P_1,P_2}^{(2)},R_{P_1,P_2}^{(3)},R_{P_1,P_2}^{(4)}\rangle\subset\bar\F_q[y_1,\dots,y_8],
\]
so that $V(I)=V_{P_1,P_2}(\bar\F_q)$. If $\Char\F_q>\max(|a_{r_1}|,|b_{r_2}|,|c_{r_2}|)$, then
\[
\lm(R_{P_1,P_2}^{(1)})=y_4^{r_1},
\]
\[
\lm(R_{P_1,P_2}^{(2)})=y_8^{r_1},
\]
\[
\lm(R_{P_1,P_2}^{(3)})=y_6^{r_2},
\]
and
\[
\lm(R_{P_1,P_2}^{(4)})=y_7^{r_2}.
\]
Thus, $\lt(I)\supset\langle y_4^{r_1},y_6^{r_2},y_7^{r_2},y_8^{r_1}\rangle$, so that $\dim V_{P_1,P_2}(\bar\F_q)\leq 4$. (In fact, the $R_{P_1,P_2}^{(i)}$'s form a Gr\"obner basis, so actually $\lt(I)=\langle y_4^{r_1},y_6^{r_2},y_7^{r_2},y_8^{r_1}\rangle$.)

That $|V_{P_1,P_2}(\F_q)|\ll_{P_1,P_2}q^4$ is now a consequence of the following corollary of the Lang-Weil bound:
\begin{theorem}[Lang and Weil, Lemma 1 of~\cite{LW}]
Let $m,n,r\in\N$ and $\F$ be a finite field. Suppose that $V=V(\{G_1,\dots,G_m\})\subset\A_{\bar\F}^n$ is an affine variety with $\deg{G_i}\leq r$ for every $i=1,\dots,m$. Then
\[
|V\cap\F|\ll_{m,n,r}|\F|^{\dim V}.
\]
\end{theorem}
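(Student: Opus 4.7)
My plan is to prove this bound by induction on $d:=\dim V$, via a generic linear projection argument. Throughout, implicit constants may depend on $m,n,r$.

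\emph{Base case} ($d=0$): Here $V$ consists of finitely many points in $\A^n$. By Bezout's theorem (applied to any $n$ generic $\F$-linear combinations of the defining $G_i$, whose vanishing locus is zero-dimensional and contains $V$), we get $|V|\leq r^n$, a constant depending only on $n$ and $r$.

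\emph{Inductive step} ($d>0$): Pass to the projective closure $\bar V\subset\P^n$. Then $|V\cap\F|\leq|\bar V(\F)|$, $\dim\bar V=d$, and the degree $\deg\bar V$ is bounded in terms of $m,n,r$ (again by Bezout applied to the defining equations). It suffices to bound $|\bar V(\F)|$. The key step is to produce an $\F$-rational linear projection $\pi:\bar V\to\P^d$ that is \emph{finite}, i.e., with fibers of size at most $\deg\bar V$. Given such a $\pi$, one concludes
\[
|\bar V(\F)|\leq\deg(\bar V)\cdot|\P^d(\F)|\ll_{m,n,r}|\F|^d.
\]

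The hard part is producing a good $\pi$ defined over $\F$. Over $\bar\F$ a generic linear projection is finite on $\bar V$ by Noether normalization; the projections that fail to be finite on $\bar V$ form a proper subvariety of the Grassmannian $G$ parameterizing linear projections $\P^n\dashrightarrow\P^d$, so they contribute $\ll|\F|^{\dim G-1}$ rational points out of the total $\sim|\F|^{\dim G}$. Thus, when $|\F|$ exceeds a threshold depending only on $m,n,r$, at least one $\F$-rational $\pi$ is finite on $\bar V$; for $|\F|$ below this threshold, the bound $|V\cap\F|\leq|\F|^n\ll_{m,n,r}1\ll|\F|^d$ is trivial and absorbed into the implicit constant. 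The main obstacle is carrying out this dimension count over $\F$ and obtaining uniform control on fiber sizes in terms of $\deg\bar V$; both are standard but somewhat technical consequences of intersection theory and the theory of Chow forms. An alternative route, which sidesteps the coordinate choice entirely, would be to decompose $\bar V$ into its (boundedly many, by Bezout) absolutely irreducible components $W_i$ and apply the classical Lang-Weil estimate $|W_i(\F)|\leq|\F|^{\dim W_i}+C(n,\deg W_i)|\F|^{\dim W_i-1/2}$ to each component, summing the resulting contributions.
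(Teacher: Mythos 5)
The paper does not prove this statement: it is quoted verbatim as Lemma~1 of Lang--Weil~\cite{LW} and invoked as a black box, so there is no in-paper proof to compare your attempt against.

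Your outline is the classical argument for such a bound --- induction on dimension via a finite linear projection to $\mathbb{P}^d$ --- and is essentially the route Lang and Weil themselves take for this lemma. Two caveats on the details you flag as ``standard but technical.'' First, in the base case you should take generic $\bar\F$-linear combinations rather than $\F$-linear ones: the Bezout bound on $|V|$ is a purely geometric statement over $\bar\F$, and when $|\F|$ is small an $\F$-generic choice of $n$ combinations may not exist; this causes no harm since $|V\cap\F|\le|V|$ anyway. Second, and more substantively, for the Grassmannian count to produce a uniform constant you must show that the locus of non-finite projections is cut out in the Grassmannian by equations whose number and degree are bounded in terms of $m,n,r$ alone; otherwise the threshold on $|\F|$ below which you fall back on the trivial bound $|V\cap\F|\le|\F|^n$ could depend on $V$ itself, which would defeat the purpose. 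Your alternative route --- decomposing into absolutely irreducible components and applying the Lang--Weil main estimate to each --- also has a gap as stated: a geometric component $W_i$ need not be defined over $\F$, and if it is not, then $W_i(\F)$ lies in the intersection of $W_i$ with its $\mathrm{Gal}(\bar\F/\F)$-conjugates, a proper closed subvariety of strictly smaller dimension, so one must still induct on dimension there. Both routes can be made rigorous, but neither is as short as the sketch suggests, which is exactly why the paper cites the result rather than reproving it.
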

As a corollary of Proposition~\ref{2pt}, we can thus bound $\Lambda_{P_1,P_2}(f_0,f_1,f_2)$ in terms of $\Lambda_{P_1,P_2}'(f_2,f_2)$:
\begin{corollary}\label{cor}
Suppose that $f_0,f_1,f_2:\F_q\to\R$. Then
\[
\Lambda_{P_1,P_2}(f_0,f_1,f_2)\ll_{P_1,P_2}\|f_0\|_{L^2}\|f_1\|_{L^2}\|f_2\|_{L^2}^{3/4}|\Lambda_{P_1,P_2}'(f_2,f_2)|^{1/8}.
\]
\end{corollary}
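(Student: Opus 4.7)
The proof proposal is essentially a single substitution. Proposition~\ref{2pt} already gives
\[
\Lambda_{P_1,P_2}(f_0,f_1,f_2)\leq \f{|V_{P_1,P_2}(\F_q)|}{q^4}\|f_0\|_{L^2}\|f_1\|_{L^2}\|f_2\|_{L^2}^{3/4}|\Lambda'_{P_1,P_2}(f_2,f_2)|^{1/8},
\]
so the only thing to verify is that the prefactor $|V_{P_1,P_2}(\F_q)|/q^4$ is $O_{P_1,P_2}(1)$ whenever $\Char\F_q$ is sufficiently large in terms of $P_1,P_2$.

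For that, I would combine the dimension bound established immediately before the corollary with the stated Lang-Weil consequence. Concretely, assuming $\Char\F_q>\max(|a_{r_1}|,|b_{r_2}|,|c_{r_2}|)$, the grlex leading-monomial computation showed $\lt(I)\supset\langle y_4^{r_1},y_6^{r_2},y_7^{r_2},y_8^{r_1}\rangle$ inside $\bar\F_q[y_1,\dots,y_8]$, and hence $\dim V_{P_1,P_2}(\bar\F_q)\leq 4$. Since $V_{P_1,P_2}$ is cut out by the four polynomials $R_{P_1,P_2}^{(1)},\dots,R_{P_1,P_2}^{(4)}$, each of degree bounded solely in terms of $\deg P_1,\deg P_2$, the Lang-Weil corollary quoted as Theorem~3.2 (Lemma~1 of~\cite{LW}) yields
\[
|V_{P_1,P_2}(\F_q)|\ll_{P_1,P_2}q^{\dim V_{P_1,P_2}(\bar\F_q)}\leq q^4.
\]

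Substituting this into the inequality from Proposition~\ref{2pt} gives exactly the asserted bound. There is no real obstacle here: the Cauchy--Schwarz work is packaged into Proposition~\ref{2pt}, and the Gröbner-basis dimension argument needed to apply Lang--Weil was just carried out. The only hypothesis one must remember to record in the statement is that $\Char\F_q$ is large enough in terms of $P_1,P_2$ (namely, larger than $\max(|a_{r_1}|,|b_{r_2}|,|c_{r_2}|)$) so that the leading-monomial identification is valid; this is the standing hypothesis throughout the paper and is what allows the implicit constant to depend only on $P_1,P_2$.
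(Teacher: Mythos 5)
Your proposal matches the paper's argument exactly: apply Proposition~\ref{2pt} and then bound the prefactor $|V_{P_1,P_2}(\F_q)|/q^4$ via the grlex leading-monomial computation showing $\dim V_{P_1,P_2}(\bar\F_q)\leq 4$ together with the Lang--Weil corollary. Your additional observation that the conclusion implicitly requires $\Char\F_q$ large (so that the leading-monomial identification holds) is correct and is indeed the standing hypothesis used throughout.
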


Proving that $\dim W_{P_1,P_2}(\bar\F_q)\leq 7$, however, is not as simple. Regardless of which graded monomial order we put on $\F[y_1,\dots,y_{16}]$, two of the defining polynomials of $W_{P_1,P_2}(\F)$ will have leading monomial equal to a power of the same $y_i$. Indeed, let
\begin{align*}
I =\langle& R_{P_1,P_2}^{(1)}(y_1,\dots,y_8),R_{P_1,P_2}^{(1)}(y_9,\dots,y_{16}), \\
& R_{P_1,P_2}^{(2)}(y_1,\dots,y_8),R_{P_1,P_2}^{(2)}(y_9,\dots,y_{16}), \\
& R_{P_1,P_2}^{(3)}(y_1,\dots,y_8),R_{P_1,P_2}^{(3)}(y_9,\dots,y_{16}), \\
& R_{P_1,P_2}^{(4)}(y_1,\dots,y_8),R_{P_1,P_2}^{(4)}(y_9,\dots,y_{16}), \\
&Q_{P_1,P_2}(y_1,\dots,y_8)-Q_{P_1,P_2}(y_9,\dots,y_{16})\rangle.
\end{align*}
Then, for every $i=1,\dots,16$, some power of the variable $y_i$ appears in at least two of the generators of $I$ as a monomial of the highest degree.

Because working in a sixteen-variable polynomial ring has the potential to become very messy, we will simplify things by intersecting $W_{P_1,P_2}$ with seven well-chosen hyperplanes. We will then show that the resulting variety is zero-dimensional. Here we need the following result on intersections of varieties, which can be found as Proposition I.7.1 in~\cite{H}.
\begin{proposition}\label{int}
Let $n\in\N$ and $\F$ be any field. Suppose that $W_1,W_2\subset\A_\F^n$ are two irreducible affine varieties, and that $Z\subset W_1\cap W_2$ is an irreducible component of $W_1\cap W_2$. Then
\[
\dim W_1+\dim W_2-n\leq\dim Z.
\]
\end{proposition}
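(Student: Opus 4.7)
The plan is to prove this standard dimension inequality by the classical diagonal trick combined with Krull's Hauptidealsatz. First, I would pass from intersections inside $\A_\F^n$ to intersections with a linear subspace inside $\A_\F^{2n}$: consider the product $W_1\times W_2\subset\A_\F^{2n}$ and the diagonal subvariety $\Delta:=\{(x,y)\in\A_\F^{2n}:x_i=y_i\text{ for all }i\}$. The first projection restricts to an isomorphism $\Delta\cong\A_\F^n$, and under it the subvariety $(W_1\times W_2)\cap\Delta$ is identified with $W_1\cap W_2$; this identification preserves irreducible components.

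Next, since $W_1$ and $W_2$ are irreducible, the product $W_1\times W_2$ is irreducible of dimension $\dim W_1+\dim W_2$. The diagonal $\Delta$ is cut out inside $\A_\F^{2n}$ by the $n$ regular functions $f_i:=x_i-y_i$ for $i=1,\dots,n$, so $(W_1\times W_2)\cap\Delta$ is obtained from $W_1\times W_2$ by successively imposing $n$ hypersurface conditions.

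Here is the key input: by Krull's Hauptidealsatz (the principal ideal theorem), if $X$ is an irreducible affine variety and $g$ is a regular function on $X$, then every irreducible component of $X\cap V(g)$ has codimension at most $1$ in $X$. Applying this inductively along the chain obtained by cutting with $f_1,\dots,f_n$ shows that every irreducible component of $(W_1\times W_2)\cap\Delta$ has codimension at most $n$ in $W_1\times W_2$. Translating back via the isomorphism above yields $\dim Z\geq\dim W_1+\dim W_2-n$ for every irreducible component $Z$ of $W_1\cap W_2$.

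The main obstacle, such as it is, is the bookkeeping in the Hauptidealsatz iteration: one must apply the theorem at each stage to every irreducible component produced so far, rather than only to $W_1\times W_2$ itself. Since we only need the upper bound on codimension and each cut can drop dimension by at most $1$, however, the iteration is routine and no further algebraic input is required. If one wanted to avoid even this mild issue, an equivalent and equally standard approach is to invoke the dimension theorem for a regular sequence on a Cohen--Macaulay local ring, applied at each generic point of a component of $(W_1\times W_2)\cap\Delta$, which gives the codimension bound in one step.
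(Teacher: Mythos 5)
The paper does not prove this proposition: it cites it as Proposition I.7.1 of Hartshorne's \emph{Algebraic Geometry} and moves on. Your argument---identify $W_1\cap W_2$ with $(W_1\times W_2)\cap\Delta$ inside $\A_\F^{2n}$, note that the product of irreducibles is irreducible of dimension $\dim W_1+\dim W_2$, and then iterate Krull's Hauptidealsatz along the $n$ linear equations cutting out $\Delta$---is precisely the standard proof of that Affine Dimension Theorem, so you have supplied the argument that the paper relies on by reference.
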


Before proving Lemma~\ref{dimbound}, we remark on why we bound $\dim W_{P_1,P_2}(\bar\F_q)$ instead of bounding the fibers of $Q_{P_1,P_2}$ directly. The reason is that it turns out to be very convenient to work over $\bar{\Q}$, so we want all of our defining polynomials to have coefficients in $\Q$. The proof of Lemma~\ref{dimbound} reduces the problem of bounding $\dim W_{P_1,P_2}(\bar\Q)$ to two inequalities that involve explicit elements of $\bar\Q$ and are straightforward to prove. Once we have shown that $\dim W_{P_1,P_2}(\bar\Q)\leq 7$, we can deduce the same bound for $\dim W_{P_1,P_2}(\bar\F_q)$ whenever the characteristic of $\F_q$ is sufficiently large.

Indeed, Exercise II.3.20 of~\cite{H} tells us that $\dim W_{P_1,P_2}(\Q)=\dim W_{P_1,P_2}(\bar\Q)$. Now set $d=\dim W_{P_1,P_2}(\Q)$ and let $I$ be as above, viewed as an ideal in $\Q[y_1,\dots,y_{16}]$. Then Noether normalization says that there exist (algebraically independent) elements $x_1,\dots,x_d\in\Q[y_1,\dots,y_{16}]/I$ such that $\Q[y_1,\dots,y_{16}]/I$ is integral over $\Q[x_1,\dots,x_d]$. Thus, each $y_i$, $i=1,\dots,16$, satisfies a monic polynomial equation
\[
y_i^{s_i}+v_{i,s_i-1}y_i^{s_i-1}+\dots+v_{i,0}
\]
with $v_{i,j}\in\Q[x_1,\dots,x_d]$. Let $S\subset\Q$ be the (finite) set of all coefficients of terms (i.e. products of the $y_1,\dots,y_{16}$'s) appearing in the $x_k$'s and coefficients of terms (i.e. products of the $x_1,\dots,x_d$'s) appearing in the $v_{i,j}$'s, and let $M$ be some fixed common multiple of the denominators of the elements of $S$. Then $\Z[1/M][y_1,\dots,y_{16}]/I$ (with $I$ as above, but as an ideal in $\Z[1/M]$) is a finitely-generated $\Z[1/M][x_1,\dots,x_d]$-module, and tensoring with $\bar\F_q$ of characteristic at least $M$, we see that $\dim{W_{P_1,P_2}(\bar\F_q)}\leq d$. With a little more work, one can show that $\dim W_{P_1,P_2}(\bar\F_q)=\dim W_{P_1,P_2}(\Q)$ when $\F_q$ has large enough characteristic, but we will only need the upper bound.

\begin{lemma}\label{dimbound}
There exists a $c_{P_1,P_2}>0$ depending only on $P_1$ and $P_2$ such that
\[
\dim W_{P_1,P_2}(\bar\F_q)\leq 7
\]
whenever the characteristic of $\F_q$ is at least $c_{P_1,P_2}$.
\end{lemma}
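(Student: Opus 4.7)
The plan is to establish the bound $\dim W_{P_1,P_2}(\bar\Q) \le 7$ via the intersection-theoretic Proposition~\ref{int}; the bound over $\bar\F_q$ of large characteristic then follows from the spreading-out argument already recorded in the paragraphs preceding the lemma statement. Concretely, I would produce seven hyperplanes $H_1, \ldots, H_7 \subset \A^{16}_{\bar\Q}$, each cut out by setting one coordinate to a generic value in $\bar\Q$, such that $Z := W_{P_1,P_2}(\bar\Q) \cap H_1 \cap \cdots \cap H_7$ is zero-dimensional and such that no $H_i$ contains an irreducible component of the variety it is about to slice. Iterating Proposition~\ref{int} then yields $\dim W_{P_1,P_2}(\bar\Q) \le \dim Z + 7 = 7$.

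The natural choice is to let $H_1, H_2, H_3$ fix $y_1, y_2, y_3$ and $H_4, H_5, H_6, H_7$ fix $y_9, y_{10}, y_{11}, y_{13}$ at generic values in $\bar\Q$. The Gr\"obner-basis argument already used to show $\dim V_{P_1,P_2}(\bar\F_q) \le 4$ applies verbatim over $\bar\Q$ and implies that fixing $(y_9, y_{10}, y_{11}, y_{13})$ collapses the second copy of $V_{P_1,P_2}$ to a finite set, while fixing only $(y_1, y_2, y_3)$ leaves a one-dimensional slice of the first copy, parameterized by $y_5$, on which $y_4, y_6, y_7, y_8$ are algebraic functions of $y_5$ of degrees $r_1, r_2, r_2, r_1$ respectively (from $P_1(y_4) = \text{const}$, $P_2(y_6) = P_2(y_5) + \text{const}$, $P_2'(y_7) = P_2'(y_5) + \text{const}$, and $P_1(y_8) = P_1(y_7) + P_1(y_6) - P_1(y_5)$). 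At this stage the sliced variety has dimension at most one, and the only remaining defining equation of $W_{P_1,P_2}$ is $Q_{P_1,P_2}(y_1,\ldots,y_8) = Q_{P_1,P_2}(y_9,\ldots,y_{16})$, whose right-hand side is now a fixed finite set of constants coming from the second slice.

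The crux is therefore to show that, for a generic choice of the seven constants, $Q_{P_1,P_2}(y_1,\ldots,y_8)$ is not identically constant on any irreducible component of the one-dimensional first slice. This is the main obstacle: the slice has at most $r_1^2 r_2^2$ branches corresponding to the algebraic choices of $y_4, y_6, y_7, y_8$ over $y_5$, and I must rule out constancy on each. I would do so by examining the leading asymptotic behavior of $Q_{P_1,P_2}$ as $y_5 \to \infty$ along a branch: each $y_j$ grows like $\lambda_j y_5$ for an algebraic constant $\lambda_j$ built from $r_1$-th and $r_2$-th roots of unity, and the leading term of $Q_{P_1,P_2}$ becomes a multiple of $y_5^{r_2}$ whose coefficient is an explicit polynomial in those $\lambda_j$'s together with $a_{r_1}, b_{r_2}$, and $c_{r_2}$. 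I expect the verification to split into the cases $r_1 < r_2$ and $r_1 = r_2$, with the normalization $a_{r_1} \ne b_{r_2}$ imposed at the start of Section~\ref{2} being essential in the second case; in branches where the leading coefficient degenerates (for instance the ``trivial'' sub-branch on which the leading $\lambda_j$'s are all $1$) one has to descend to the next-to-leading order and argue there. Once non-constancy is verified on every branch, the equation $Q_1 = Q_2$ cuts each component of the one-dimensional first slice to a finite set, so $\dim Z \le 0$ and the argument closes.
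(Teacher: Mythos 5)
Your high-level framework is the same as the paper's: spread out to reduce to $\bar\Q$, intersect with a codimension-$7$ affine-linear variety $W_2$ meeting a top-dimensional component $W_1$, and invoke Proposition~\ref{int} so that it suffices to show $\dim(W_{P_1,P_2}(\bar\Q)\cap W_2)=0$. Your choice of $W_2$ (fix $y_1,y_2,y_3$ and $y_9,y_{10},y_{11},y_{13}$ so that the second copy of $V_{P_1,P_2}$ collapses to a finite set and the first becomes a curve) is plausible and not the same as the paper's, which in the case $r_1<r_2$ fixes $y_3,y_5,y_{11},y_{12},y_{13}$ and the linear combinations $y_8-e_{r_2}(r)y_7$, $y_{16}-e_{r_2}(r)y_{15}$, and in the case $r_1=r_2$ uses an even less obvious set including a degree-$r_3$ equation. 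These particular choices are what make the subsequent leading-monomial computations collapse to two explicit univariate polynomial identities.

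The gap is exactly where you flag it. You must rule out $Q_{P_1,P_2}$ being constant on every irreducible component of the one-dimensional first slice, and your proposed tool (leading asymptotics as $y_5\to\infty$) degenerates on the branch where $y_6,y_7,y_8\sim y_5$. A short computation on that branch shows $P_2(y_8)-P_2(y_7)$ tends to a finite constant, so the leading term gives you nothing there, and since a regular function on an affine curve that is bounded at every branch at infinity is constant, you would actually need to establish blow-up on some \emph{other} branch of the \emph{same} component, or descend to next-order asymptotics on the trivial branch as you suggest. You do not do this, and it is not a routine detail: the component structure of the slice is opaque, the next-order coefficient must be shown nonzero, and there is no a priori reason it cannot also vanish (prompting a further descent). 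This is precisely the point at which the paper's proof does real work: rather than reasoning asymptotically on a $1$-dimensional intermediate slice, it goes directly to $0$-dimensionality by exhibiting, in a carefully chosen grlex order, polynomials in the ideal whose leading monomials are pure powers of every variable, and the entire difficulty is packaged into the question of whether two explicit univariate polynomials (the $H_1''$ and $H_2''$ in the proof) share a root over $\bar\Q$. That root-separation is then settled by elementary absolute-value estimates on roots of unity, with a genuinely separate argument in the two cases $r_1<r_2$ and $r_1=r_2$ (the latter using the normalization $a_{r_1}\ne b_{r_2}$ and the polynomial $P_3$ you never introduce). Without carrying out some analogue of this step, the proof does not close. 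A secondary, more minor issue: ``generic constants'' must be replaced by constants taken from a specific point $w\in W_1$ (as the paper does) so that $W_1\cap W_2\ne\emptyset$ is guaranteed; genericity alone does not ensure this without first showing the coordinate projection of $W_1$ is dominant.
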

\begin{proof}
By the discussion above, it suffices to show that $\dim W_{P_1,P_2}(\bar\Q)\leq 7$. Let $W_1$ be a top-dimensional irreducible component of $W_{P_1,P_2}(\bar\Q)$, and suppose that $W_2\subset\A_{\bar\Q}^{16}$ is an irreducible affine variety of dimension $9$ such that $W_1\cap W_2\neq\emptyset$. Then Proposition~\ref{int} implies that
\[
\dim W_{P_1,P_2}(\bar\Q)\leq 7+\dim(W_1\cap W_2)\leq 7+\dim(W_{P_1,P_2}(\bar\Q)\cap W_2).
\]
It thus suffices to find such a $W_2$ for which $\dim(W_{P_1,P_2}(\bar\Q)\cap W_2)=0$.

First suppose that $r_1<r_2$. Write $r_i=(r_1,r_2)r_i'$ for $i=1,2$, so that $(r_1',r_2')=1$. Let $r$ be any integer whose reduction modulo $r_2'$ is the multiplicative inverse of $r_1'$ modulo $r_2'$, and fix some $w=(w_1,\dots,w_{16})\in W_1$. Set $u_1=w_8-e_{r_2}(r)w_7,u_2=w_3,u_3=w_5,u_4=w_{16}-e_{r_2}(r)w_{15},u_5=w_{11},u_6=w_{13},$ and $u_7=w_{12}$. Then we take
\begin{align*}
W_{2}=V(\langle &y_8-e_{r_2}(r)y_7-u_1,y_3-u_2,y_5-u_3, \\
&y_{16}-e_{r_2}(r)y_{15}-u_4,y_{11}-u_5,y_{13}-u_6,y_{12}-u_7\rangle),
\end{align*}
which is a $9$-dimensional irreducible subvariety of $\A_{\bar\Q}^{16}$ such that $W_1\cap W_2\neq\emptyset$. Also set
\begin{align*}
I =\langle& R_{P_1,P_2}^{(1)}(y_1,\dots,y_8),R_{P_1,P_2}^{(1)}(y_9,\dots,y_{16}), \\
& R_{P_1,P_2}^{(2)}(y_1,\dots,y_8),R_{P_1,P_2}^{(2)}(y_9,\dots,y_{16}), \\
& R_{P_1,P_2}^{(3)}(y_1,\dots,y_8),R_{P_1,P_2}^{(3)}(y_9,\dots,y_{16}), \\
& R_{P_1,P_2}^{(4)}(y_1,\dots,y_8),R_{P_1,P_2}^{(4)}(y_9,\dots,y_{16}), \\
&Q_{P_1,P_2}(y_1,\dots,y_8)-Q_{P_1,P_2}(y_9,\dots,y_{16}), \\
&y_8-e_{r_2}(r)y_7-u_1,y_3-u_2,y_5-u_3, \\
&y_{16}-e_{r_2}(r)y_{15}-u_4,y_{11}-u_5,y_{13}-u_6,y_{12}-u_7\rangle,
\end{align*}
so that $W_{P_1,P_2}(\bar\Q)\cap W_2=V(I)$.

We put the grlex order with
\[
y_8>y_4>y_7>y_3>y_6>y_2>y_5>y_1,
\]
\[
y_{16}>y_{12}>y_{15}>y_{11}>y_{14}>y_{10}>y_{13}>y_9,
\]
and $y_1>y_{16}$ on $\bar\Q[y_1,\dots,y_{16}]$. By reducing the generating polynomials of $W_{P_1,P_2}(\bar\Q)$ modulo the generating polynomials of $W_2$ and dividing by either $a_{r_1}$ or $b_{r_2}$ (in this case, $c_{r_2}=b_{r_2}$), we see that $I$ contains polynomials of the form
\[
y_4^{r_1}-y_2^{r_1}+y_1^{r_1}+\text{lower degree terms},
\]
\[
(e_{r_2'}(1)-1)y_7^{r_1}-y_6^{r_1}+\text{lower degree terms},
\]
\[
y_6^{r_2}-y_2^{r_2}+y_1^{r_2}+\text{lower degree terms},
\]
\[
y_7^{r_2}+y_1^{r_2}+\text{lower degree terms},
\]
\[
-y_{10}^{r_1}+y_9^{r_1}+\text{lower degree terms},
\]
\[
(e_{r_2'}(1)-1)y_{15}^{r_1}-y_{14}^{r_1}+\text{lower degree terms},
\]
\[
y_{14}^{r_2}-y_{10}^{r_2}+y_9^{r_2}+\text{lower degree terms},
\]
\[
y_{15}^{r_2}+y_9^{r_2}+\text{lower degree terms},
\]
and
\[
-y_4^{r_2}+\text{lower degree terms},
\]
in addition to the polynomials $y_8-e_{r_2}(r)y_7-u_1,y_3-u_2,y_5-u_3,y_{16}-e_{r_2}(r)y_{15}-u_4,y_{11}-u_5,y_{13}-u_6,$ and $y_{12}-u_7$. Thus, it is immediate that $I$ contains polynomials with leading terms equal to
\[
y_3,y_4^{r_1},y_5,y_6^{r_2},y_7^{r_1},y_8,y_{10}^{r_1},y_{11},y_{12},y_{13},y_{14}^{r_2},y_{15}^{r_1},\text{ and }y_{16}.
\]
To show that $\dim V(I)=0$, then, it remains to show that there exist $G_1,G_2,G_9\in I$ and $\alpha_1,\alpha_2,\alpha_9>0$ such that $\lm(G_i)=y_i^{\alpha_i}$ for each $i=1,2,9$.

Note that $I$ also contains elements of the form
\[
y_4^{[r_1,r_2]}-(y_2^{r_1}-y_1^{r_1})^{r_2'}+\text{lower degree terms},
\]
\[
(e_{r_2'}(1)-1)^{r_2'}y_7^{[r_1,r_2]}-y_6^{[r_1,r_2]}+\text{lower degree terms},
\]
\[
y_6^{[r_1,r_2]}-(y_2^{r_2}-y_1^{r_2})^{r_1'}+\text{lower degree terms},
\]
\[
y_7^{[r_1,r_2]}-(-y_1^{r_2})^{r_1'}+\text{lower degree terms},
\]
and
\[
y_4^{[r_1,r_2]}+\text{lower degree terms}.
\]
Thus, there exist $H_1,H_2\in I$ with
\begin{align*}
H_1&=y_4^{[r_1,r_2]}-(y_4^{[r_1,r_2]}-(y_2^{r_1}-y_1^{r_1})^{r_2'})+\text{lower degree terms} \\
&=(y_2^{r_1}-y_1^{r_1})^{r_2'}+\text{lower degree terms}
\end{align*}
and
\begin{align*}
H_2=&(e_{r_2'}(1)-1)^{r_2'}(y_7^{[r_1,r_2]}-(-y_1^{r_2})^{r_1'})-((e_{r_2'}(1)-1)^{r_2'}y_7^{[r_1,r_2]}-y_6^{[r_1,r_2]}) \\
&-(y_6^{[r_1,r_2]}-(y_2^{r_2}-y_1^{r_2})^{r_1'})+\text{lower degree terms} \\
=& (y_2^{r_2}-y_1^{r_2})^{r_1'}-(e_{r_2'}(1)-1)^{r_2'}(-y_1^{r_2})^{r_1'}+\text{lower degree terms}.
\end{align*}
Note that $\lm(H_1)=\lm(H_2)=y_2^{[r_1,r_2]}$. Now set
\[
H_1'=(y_2^{r_1}-y_1^{r_1})^{r_2'}
\]
and
\[
H_2'=(y_2^{r_2}-y_1^{r_2})^{r_1'}-(e_{r_2'}(1)-1)^{r_2'}(-y_1^{r_2})^{r_1'},
\]
so that $H_1'$ and $H_2'$ are both homogeneous polynomials of degree $[r_1,r_2]$ in $\bar\Q[y_1,y_2]$.

Let
\begin{align*}
\mathcal{H}=&\{(y_1^iy_2^{[r_1,r_2]-1-i})H_1':0\leq i\leq [r_1,r_2]-1\} \\ &\cup\{(y_1^iy_2^{[r_1,r_2]-1-i})H_2':0\leq i\leq [r_1,r_2]-1\}.
\end{align*}
If the polynomials in $\HH$ generate the $\bar\Q$-vector space of homogeneous degree $2[r_1,r_2]-1$ polynomials in $\bar\Q[y_1,y_2]$, then $\langle H_1,H_2\rangle$, and thus $I$, certainly contains an element with leading monomial $y_1^{2[r_1,r_2]-1}$. Since $|\mathcal{H}|=2[r_1,r_2]$ and the space of homogeneous degree $2[r_1,r_2]-1$ polynomials in $\Q[y_1,y_2]$ has dimension $2[r_1,r_2]$, to show that $I$ contains an element with leading monomial $y_1^{2[r_1,r_2]-1}$ it now suffices to show that the polynomials in $\mathcal{H}$ are linearly independent.

If the elements of $\HH$ were not linearly independent, then there would exist nonzero homogeneous polynomials $F_1,F_2\in\bar\Q[y_1,y_2]$ of degree $[r_1,r_2]-1$ such that $F_1H_2'=F_2H_1'$. As a consequence, we certainly have that $F_1(1,y_2)H_2'(1,y_2)=F_2(1,y_2)H_1'(1,y_2)$ in $\bar\Q[y_2]$, and since $H_1'$ and $H_2'$ both have degree $[r_1,r_2]$ in $y_2$, this implies that the polynomials
\[
H''_1=H_1'(1,z)=(z^{r_1}-1)^{r_2'}
\]
and 
\[
H''_2=H_2'(1,z)=(z^{r_2}-1)^{r_1'}-(-1)^{r_1'}(e_{r_2'}(1)-1)^{r_2'}
\]
must have a common root over $\bar\Q$. We will show that this is impossible when $r_1<r_2$.

Let $\omega$ be a root of $H''_1$, so $\omega=e_{r_1}(a)$ for some $a\in\Z$. Then
\[
H''_2(\omega)=(e_{r_1'}(ar_2')-1)^{r_1'}-(-1)^{r_1'}(e_{r_2'}(1)-1)^{r_2'}.
\]
Since $r_2'>r_1'$, we have $|e_{r_2'}(1)-1|^{r_2'}<|e_{r_1'}(ar_2')-1|^{r_1'}$ for every $a$ for which $e_{r_1'}(ar_2')\neq 1$, and when $e_{r_1'}(ar_2')=1$, we have $|e_{r_1'}(ar_2')-1|^{r_1'}=0<|e_{r_2'}(1)-1|^{r_2'}$. So, $H''_2(\omega)\neq 0$.

Thus, $I$ contains a polynomial $G_1$ with $\lm(G_1)=y_1^{2[r_1,r_2]-1}$ and a polynomial $G_2=H_1$ with $\lm(G_2)=y_2^{[r_1,r_2]}$. Since $I$ also contains $y_{12}^{r_2}-u_7^{r_2}$ and a polynomial of the form
\[
y_{12}^{r_1}-y_{10}^{r_1}+y_9^{r_1}+\text{lower degree terms},
\]
the proof that $I$ contains an element with leading monomial $y_9^{2[r_1,r_2]-1}$ is identical to the argument just given, but with $y_1,y_2,y_4,y_6,$ and $y_7$ replaced by $y_9,y_{10},y_{12},y_{14},$ and $y_{15}$, respectively. We conclude that $\dim V(I)=0$.

Now suppose that $r_1=r_2$. In this case, we have that
\begin{equation}\label{qprime}
Q_{P_1,P_2}':=Q_{P_1,P_2}+\f{b_{r_2}}{a_{r_1}}(R_{P_1,P_2}^{(1)}-R_{P_1,P_2}^{(2)})+R_{P_1,P_2}^{(3)}
\end{equation}
equals
\[
P_3(y_1)-P_3(y_2)-P_3(y_3)+P_3(y_4)-P_3(y_5)+P_3(y_6)+P_3(y_7)-P_3(y_8).
\]
As before, fix some $w\in (w_1,\dots,w_{16})\in W_1$ and set $u_1=w_8-w_3,u_2=w_4-w_5,u_3=w_1,u_4=w_{16}-w_{11},u_5=w_{12}-w_{13},u_6=w_9,$ and $u_7=w_{15}^{r_3}-2w_{11}^{r_3}+w_{14}^{r_3}-w_{10}^{r_3}$. We take
\begin{align*}
W_2= V(\langle &y_8-y_3-u_1,y_4-y_5-u_2,y_1-u_3, \\
&y_{16}-y_{11}-u_4,y_{12}-y_{13}-u_5,y_9-u_6,y_{15}^{r_3}-2y_{11}^{r_3}+y_{14}^{r_3}-y_{10}^{r_3}-u_7\rangle),
\end{align*}
which is a $9$-dimensional irreducible (apply Eisenstein's criterion to $y_{15}^{r_3}-2y_{11}^{r_3}+y_{14}^{r_3}-y_{10}^{r_3}-u_7$ in $\bar\Q[y_{10},y_{14},y_{15}][y_{11}]$) subvariety of $\A_{\bar\Q}^{16}$ such that $W_1\cap W_2\neq\emptyset$. Set
\begin{align*}
I =\langle& R_{P_1,P_2}^{(1)}(y_1,\dots,y_8),R_{P_1,P_2}^{(1)}(y_9,\dots,y_{16}), \\
& R_{P_1,P_2}^{(2)}(y_1,\dots,y_8),R_{P_1,P_2}^{(2)}(y_9,\dots,y_{16}), \\
& R_{P_1,P_2}^{(3)}(y_1,\dots,y_8),R_{P_1,P_2}^{(3)}(y_9,\dots,y_{16}), \\
& R_{P_1,P_2}^{(4)}(y_1,\dots,y_8),R_{P_1,P_2}^{(4)}(y_9,\dots,y_{16}), \\
&Q_{P_1,P_2}'(y_1,\dots,y_8)-Q_{P_1,P_2}'(y_9,\dots,y_{16}) \\
&y_8-y_3-u_1,y_4-y_5-u_2,y_1-u_3, \\
&y_{16}-y_{11}-u_4,y_{12}-y_{13}-u_5,y_9-u_6,y_{15}^{r_3}-2y_{11}^{r_3}+y_{14}^{r_3}-y_{10}^{r_3}-u_7\rangle,
\end{align*}
which also contains $Q_{P_1,P_2}(y_1,\dots,y_8)-Q_{P_1,P_2}(y_9,\dots,y_{16})$ by~(\ref{qprime}). As above, we have $W_{P_1,P_2}(\bar\Q)\cap W_2=V(I)$.

We put the grlex order with
\[
y_8>y_4>y_7>y_3>y_6>y_5>y_2>y_1,
\]
\[
y_{16}>y_{12}>y_{15}>y_{11}>y_{14}>y_{13}>y_{10}>y_9,
\]
and $y_1>y_{16}$ on $\bar\Q[y_1,\dots,y_{16}]$. This is almost the same as the order used in the previous case, except that we have swapped $y_2$ with $y_5$ and $y_{10}$ with $y_{13}$. Reducing the generating polynomials of $W_{P_1,P_2}(\bar\Q)$ with $Q_{P_1,P_2}(y_1,\dots,y_8)-Q_{P_1,P_2}(y_9,\dots,y_{16})$ replaced by $Q_{P_1,P_2}'(y_1,\dots,y_8)-Q_{P_1,P_2}'(y_9,\dots,y_{16})$ by the generating polynomials of $W_2$ and dividing by either $a_{r_1},b_{r_1},c_{r_1},$ or $d_{r_3}$, we get that $I$ contains polynomials of the form
\[
-y_3^{r_1}+y_5^{r_1}-y_2^{r_1}+\text{lower degree terms},
\]
\[
-y_7^{r_1}+y_3^{r_1}-y_6^{r_1}+y_5^{r_1}+\text{lower degree terms},
\]
\[
y_6^{r_1}-y_5^{r_1}-y_2^{r_1}+\text{lower degree terms},
\]
\[
y_7^{r_1}-y_3^{r_1}-y_5^{r_1}+\text{lower degree terms},
\]
\[
-y_{11}^{r_1}+y_{13}^{r_1}-y_{10}^{r_1}+\text{lower degree terms},
\]
\[
-y_{15}^{r_1}+y_{11}^{r_1}-y_{14}^{r_1}+y_{13}^{r_1}+\text{lower degree terms},
\]
\[
y_{14}^{r_1}-y_{13}^{r_1}-y_{10}^{r_1}+\text{lower degree terms},
\]
\[
y_{15}^{r_1}-y_{11}^{r_1}-y_{13}^{r_1}+\text{lower degree terms},
\]
and
\begin{equation}\label{seven1}
-y_7^{r_3}+2y_3^{r_3}-y_6^{r_3}+y_2^{r_3}+\text{lower degree terms},
\end{equation}
in addition to the polynomials $y_8-y_3-u_1,y_4-y_5-u_2,y_1-u_3,y_{16}-y_{11}-u_4,y_{12}-y_{13}-u_5,y_9-u_6,$ and $y_{15}^{r_3}-2y_{11}^{r_3}+y_{14}^{r_3}-y_{10}^{r_3}-u_7$. Thus, the ideal $I$ contains polynomials with leading terms equal to
\[
y_1,y_3^{r_1},y_4,y_6^{r_1},y_7^{r_1},y_8,y_9,y_{11}^{r_1},y_{12},y_{14}^{r_1},y_{15}^{r_1},\text{ and }y_{16},
\]
and to prove that $\dim V(I)=0$, it suffices to show that there exist polynomials $G_2,G_5,G_{10},G_{13}\in I$ and $\alpha_2,\alpha_5,\alpha_{10},\alpha_{13}>0$ such that $\lm(G_i)=y_i^{\alpha_i}$ for each $i=2,5,10,13$.

Note that $I$ also contains elements of the form
\begin{equation}\label{six}
y_6^{r_1}+\text{lower degree terms},
\end{equation}
\begin{equation}\label{seven2}
y_7^{r_1}-y_3^{r_1}+y_2^{r_1}+\text{lower degree terms},
\end{equation}
\begin{equation}\label{three}
y_3^{r_1}+2y_2^{r_1}+\text{lower degree terms},
\end{equation}
and
\[
y_5^{r_1}+y_2^{r_1}+\text{lower degree terms},
\]
so that $y_5^{r_1}\in\lt(I)$.

Since $I$ contains polynomials of the form~(\ref{seven1}) and~(\ref{seven2}), it certainly contains ones of the form
\[
\left(\sum_{i=0}^{r_1-1}(y_7^{r_3})^i(2y_3^{r_3}+y_2^{r_3})^{r_1-1-i}\right)y_6^{r_3}+y_7^{r_1r_3}-(2y_3^{r_3}+y_2^{r_3})^{r_1}+\text{lower degree terms}
\]
and
\[
y_7^{r_1r_3}-(y_3^{r_1}-y_2^{r_1})^{r_3}+\text{lower degree terms},
\]
and hence of the form
\begin{align*}
&\left(\sum_{i=0}^{r_1-1}(y_7^{r_3})^i(2y_3^{r_3}+y_2^{r_3})^{r_1-1-i}\right)y_6^{r_3}-((2y_3^{r_3}+y_2^{r_3})^{r_1}-(y_3^{r_1}-y_2^{r_1})^{r_3}) \\
&+\text{lower degree terms}
\end{align*}
as well. As $I$ also contains a polynomial of the form
\begin{align*}
&\left(\sum_{i=0}^{r_1-1}(y_7^{r_3})^i(2y_3^{r_3}+y_2^{r_3})^{r_1-1-i}\right)^{r_1}y_6^{r_1r_3}-((2y_3^{r_3}+y_2^{r_3})^{r_1}-(y_3^{r_1}-y_2^{r_1})^{r_3})^{r_1} \\
&+\text{lower degree terms}
\end{align*}
and, since $I$ contains~(\ref{six}), a polynomial of the form
\[
\left(\sum_{i=0}^{r_1-1}(y_7^{r_3})^i(2y_3^{r_3}+y_2^{r_3})^{r_1-1-i}\right)^{r_1}y_6^{r_1r_3}+\text{lower degree terms},
\]
we see that $I$ contains an element $H_1$ of the form
\[
H_1=((2y_3^{r_3}+y_2^{r_3})^{r_1}-(y_3^{r_1}-y_2^{r_1})^{r_3})^{r_1}+\text{lower degree terms}.
\]
In addition, $I$ contains a polynomial of the form~(\ref{three}), so it also contains a polynomial $H_2$ of the form
\[
H_2=(y_3^{r_1}+2y_2^{r_1})^{r_1r_3}+\text{lower degree terms}.
\]

Now set
\[
H_1'(y_2,y_3)=(y_3^{r_1}+2y_2^{r_1})^{r_1r_3}
\]
and
\[
H_2'(y_2,y_3)=((2y_3^{r_3}+y_2^{r_3})^{r_1}-(y_3^{r_1}-y_2^{r_1})^{r_3})^{r_1},
\]
which are both homogeneous polynomials of degree $r_1^2r_3$ in $\bar\Q[y_2,y_3]$. By a similar argument as the one given in the previous case, it follows that $I$ contains a polynomial with leading monomial equal to $y_2^{2r_1^2r_3}$ if there do not exist nonzero homogeneous polynomials $F_1,F_2\in\bar\Q[y_2,y_3]$ of degree $r_1^2r_3-1$ such that $F_1H_2'=F_2H_1'$. Because $H_1'$ and $H_2'$ both have degree $r_1^2r_3$ in $y_3$, if there did exist such polynomials, then
\[
H_1''=z^{r_1}+2
\]
and
\[
H_2''=(2z^{r_3}+1)^{r_1}-(z^{r_1}-1)^{r_3}
\]
would share a common root. We will show that this is impossible when $r_1<r_3$.

Suppose that $\omega\in\bar\Q$ is a root of $H_1''$. Then $\omega=e_{r_1}(a/2)2^{1/r_1}$ for some odd $a\in\Z$, and we have
\[
H_2''(\omega)=(2^{r_3/r_1+1}e_{r_1}(ar_3/2)+1)^{r_1}-(-3)^{r_3}.
\]
Note that
\[
|(2^{r_3/r_1+1}e_{r_1}(ar_3/2)+1)^{r_1}|\geq(2^{r_3/r_1+1}-1)^{r_1}
\]
for all $a\in\Z$, so for $\omega$ to be a root of $H_2''$, we need that
\[
3^{r_3/r_1}\geq 2^{r_3/r_1+1}-1.
\]
However, as can be easily checked, the function $x\mapsto 2^{x+1}-3^x-1$ is always positive on the interval $(0,1)$. Since $0<r_3/r_1<1$, we see that $H_2''(\omega)\neq 0$.

Thus, $I$ contains elements with leading monomial equal to $y_2^{r_1^2r_3}$ and $y_5^{r_1}$. As before, the proof that $I$ contains elements with leading monomials equal to $y_{10}^{r_1^2r_3}$ and a power of $y_{13}^{r_1}$ is the same. We conclude that $\dim V(I)=0$ in this case as well.
\end{proof}

\section{Proof of Theorem~\ref{main}}\label{4}

We can now deduce from Proposition~\ref{charsum} and Lemma~\ref{dimbound} the character sum bound needed to complete the proof of Theorem~\ref{main}.
\begin{proposition}\label{last}
There exists a $c_{P_1,P_2}>0$ depending only on $P_1$ and $P_2$ such that if the characteristic of $\F_q$ is at least $c_{P_1,P_2}$ and $\psi\in\widehat{\F}_q$ is nontrivial, then
\[
\E_{y\in V_{P_1,P_2}(\F_q)}\psi(Q_{P_1,P_2}(y))\ll_{P_1,P_2}q^{-1/2}.
\]
\end{proposition}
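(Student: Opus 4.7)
The plan is to apply Kowalski's character sum estimate (Proposition~\ref{charsum}) directly to $V = V_{P_1,P_2}$ and $F = Q_{P_1,P_2}$, with the multiplicative character $\chi$ and the regular function $G$ taken to be trivial. Granting the hypotheses of that proposition, it will yield
\[
\sum_{y \in V_{P_1,P_2}(\F_q)} \psi(Q_{P_1,P_2}(y)) \;\ll_{P_1,P_2}\; q^{\dim V_{P_1,P_2}(\bar\F_q) - 1/2} \;\leq\; q^{7/2},
\]
using the dimension bound from Section~\ref{3}. Combined with a matching Lang-Weil lower bound $|V_{P_1,P_2}(\F_q)| \gg_{P_1,P_2} q^4$---available because $V_{P_1,P_2}$ is a four-dimensional complete intersection with a $\Q$-defined top-dimensional geometric component visible from the Gr\"obner basis computation in Section~\ref{3}---dividing yields the desired $\ll_{P_1,P_2} q^{-1/2}$ for the average.

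The one nontrivial hypothesis to verify is the fiber condition of Proposition~\ref{charsum}: that $|Q_{P_1,P_2}^{-1}(a) \cap V_{P_1,P_2}(\F_q)| \leq \eta_{V_{P_1,P_2}} |V_{P_1,P_2}(\F_q)|$ uniformly in $a \in \F_q$. This is precisely where the auxiliary variety $W_{P_1,P_2}$ and Lemma~\ref{dimbound} are used. By definition $W_{P_1,P_2}$ is the self-fiber-product of $V_{P_1,P_2}$ over $Q_{P_1,P_2}$, so
\[
|W_{P_1,P_2}(\F_q)| \;=\; \sum_{a \in \F_q} \bigl|Q_{P_1,P_2}^{-1}(a) \cap V_{P_1,P_2}(\F_q)\bigr|^2,
\]
whence $\max_a |Q_{P_1,P_2}^{-1}(a) \cap V_{P_1,P_2}(\F_q)| \leq |W_{P_1,P_2}(\F_q)|^{1/2}$. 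Lemma~\ref{dimbound} together with the Lang-Weil bound gives $|W_{P_1,P_2}(\F_q)| \ll_{P_1,P_2} q^7$, and so the maximum single-fiber size is $\ll_{P_1,P_2} q^{7/2}$. Taking the ratio with $|V_{P_1,P_2}(\F_q)| \gg_{P_1,P_2} q^4$ produces an upper bound of order $q^{-1/2}$, which is at most $\eta_{V_{P_1,P_2}}$ as soon as $\Char \F_q$ exceeds some constant depending only on $P_1$ and $P_2$; absorbing this into the $c_{P_1,P_2}$ of the statement completes the verification of the hypothesis.

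All the serious algebraic geometry has already been carried out in Lemma~\ref{dimbound}, so I expect no further obstacle beyond confirming that the characteristic is indeed sufficiently large; the main content of the proposition lies in the dimension bound for $W_{P_1,P_2}(\bar\F_q)$, which is the engine driving the fiber estimate above. Granting that, what remains is simply to chain the Gr\"obner basis dimension estimate, Lang-Weil for both $V_{P_1,P_2}$ and $W_{P_1,P_2}$, the fiber-product identity, and Kowalski's black box.
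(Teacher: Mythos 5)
Your overall strategy matches the paper's: apply Kowalski's Proposition~\ref{charsum} to $(V_{P_1,P_2},Q_{P_1,P_2})$ with trivial $\chi$, and verify the small-fiber hypothesis by combining Lemma~\ref{dimbound}, Lang-Weil, and the self-fiber-product identity $|W_{P_1,P_2}(\F_q)|=\sum_a|Q_{P_1,P_2}^{-1}(a)\cap V_{P_1,P_2}(\F_q)|^2$. Your fiber bound $\max_a|Q_{P_1,P_2}^{-1}(a)\cap V_{P_1,P_2}(\F_q)|\le|W_{P_1,P_2}(\F_q)|^{1/2}\ll q^{7/2}$ is weaker than the paper's $\ll q^3$ (obtained by first deducing that every fiber has geometric dimension at most $3$), but it is still strong enough to make the ratio with $|V_{P_1,P_2}(\F_q)|$ tend to $0$, so this is only a cosmetic difference.

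The gap is in the lower bound $|V_{P_1,P_2}(\F_q)|\gg_{P_1,P_2}q^4$. You assert this follows from Lang-Weil because $V_{P_1,P_2}$ is ``a four-dimensional complete intersection with a $\Q$-defined top-dimensional geometric component visible from the Gr\"obner basis computation,'' but the Gr\"obner basis argument in Section~\ref{3} establishes only that $\dim V_{P_1,P_2}(\bar\F_q)=4$. It says nothing about the geometric irreducibility of the top-dimensional components or whether any of them is defined over $\F_q$; without an $\F_q$-rational geometrically irreducible component of dimension $4$, the Lang-Weil \emph{lower} bound gives nothing. (The corollary of Lang-Weil stated in Section~\ref{3} is an upper bound only, and it is indeed used only as such.) The paper sidesteps this entirely: it proves $|V_{P_1,P_2}(\F_q)|\ge q^4$ unconditionally by two applications of the elementary Cauchy-Schwarz inequality
\[
|S|^2=\Bigl(\sum_{s'\in S'}|f^{-1}(s')|\Bigr)^2\le|f(S)|\sum_{s'\in S'}|f^{-1}(s')|^2,
\]
first to the map $T_1(y_1,y_2)=P_1(y_2)-P_1(y_1)$ on $\F_q^2$ to get $|\F_q^2\times_{T_1}\F_q^2|\ge q^3$, and then to the map $(y_1,\dots,y_4)\mapsto(P_2(y_2)-P_2(y_1),\,P_2'(y_3)-P_2'(y_1))$ on that fiber product to get $|V_{P_1,P_2}(\F_q)|\ge q^4$. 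You should replace the appeal to Lang-Weil for the lower bound with this direct counting argument (or supply a separate argument for the existence of an $\F_q$-rational top-dimensional geometrically irreducible component).
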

\begin{proof}
That $V_{P_1,P_2}(\bar\F_q)$ and $Q_{P_1,P_2}$ satisfy the hypotheses of Proposition~\ref{charsum} follows from Lemma~\ref{dimbound} and the Lang-Weil bound, which together tell us that
\[
\sum_{a\in\F_q}|Q\1_{P_1,P_2}(a)|^2=|W_{P_1,P_2}(\F_q)|\ll_{P_1,P_2}q^7,
\]
so that every fiber $Q_{P_1,P_2}\1(a)$ of $Q_{P_1,P_2}$ must have dimension at most $3$ when $q$ and the characteristic of $\F_q$ are sufficiently large. Hence, $|Q_{P_1,P_2}\1(a)|\ll_{P_1,P_2}q^3$. By the argument given in Section~\ref{3}, we know that $|V_{P_1,P_2}(\F_q)|\ll_{P_1,P_2}q^4$, so it only remains to check that $|V_{P_1,P_2}(\F_q)|\gg_{P_1,P_2}q^4$. This will imply that $|Q_{P_1,P_2}\1(a)|\ll_{P_1,P_2}|V_{P_1,P_2}(\F_q)|/q$ for all $a\in\F_q$, so that our desired bound will hold when $q$ is sufficiently large.

That $|V_{P_1,P_2}(\F_q)|\geq q^4$ follows easily from two applications of Cauchy-Schwarz. Indeed, if $S$ and $S'$ are any two finite sets and $f:S\to S'$, then
\begin{equation}\label{size}
|S|^2=\left(\sum_{s'\in S'}|f\1(s')|\right)^2\leq |f(S)|\sum_{s'\in S'}|f\1(s')|^2,
\end{equation}
Applying~(\ref{size}) with the function $T_1:\F_q^2\to\F_q$ defined, as in the proof of Proposition~\ref{2pt}, by
\[
T_1(y_1,y_2)=P_1(y_2)-P_1(y_1)
\]
gives the bound $|\F_q^2\times_{T_1}\F_q^2|\geq q^3$, and then applying~(\ref{size}) again with the function from $\F_q^2\times_{T_1}\F_q^2$ to $\F_q^2$ defined by
\[
(y_1,\dots,y_4)\mapsto\begin{pmatrix}
P_2(y_2)-P_2(y_1) \\
P_2'(y_3)-P_2'(y_1)
\end{pmatrix}
\]
yields the bound $|V_{P_1,P_2}(\F_q)|\geq q^4$.
\end{proof}

Now we can prove Theorem~\ref{main}.
\begin{proof}[Proof of Theorem~\ref{main}]
By Corollary~\ref{cor}, we have
\[
\Lambda_{P_1,P_2}(f_0,f_1,f_2)\ll_{P_1,P_2}\|f_0\|_{L^2}\|f_1\|_{L^2}\|f_2\|_{L^2}^{3/4}|\Lambda_{P_1,P_2}'(f_2,f_2)|^{1/8},
\]
and by Fourier inversion, Parseval's identity, and orthogonality of characters, we have
\[
\Lambda_{P_1,P_2}'(f_2,f_2)=\sum_{\psi\in\widehat{\F}_q}|\widehat{f_2}(\psi)|^2[\E_{y\in V_{P_1,P_2}(\F_q)}\psi(Q_{P_1,P_2}(y))].
\]
If $\psi=1$, then $\widehat{f_2}(\psi)=0$ since $f_2$ has mean zero, and if $\psi\neq 1$, then $\E_{y\in V_{P_1,P_2}(\F_q)}\psi(Q_{P_1,P_2}(y))\ll_{P_1,P_2}q^{-1/2}$ by Proposition~\ref{last}. Thus,
\[
|\Lambda_{P_1,P_2}'(f_2,f_2)|\ll_{P_1,P_2}\|f_2\|_{L^2}^2q^{-1/2}
\]
by Parseval's identity. We conclude that
\[
\Lambda_{P_1,P_2}(f_0,f_1,f_2)\ll_{P_1,P_2}\|f_0\|_{L^2}\|f_1\|_{L^2}\|f_2\|_{L^2}q^{-1/16}.
\]
\end{proof}
\bibliographystyle{plain}
\bibliography{bib}

\end{document}